\theoremstyle{plain}
\newtheorem{thm}{Theorem}[section]
\newtheorem{prop}[thm]{Proposition}
\newtheorem{cor}[thm]{Corollary}
\newtheorem{lem}[thm]{Lemma}
\theoremstyle{definition}
\newtheorem{exa}[thm]{Example}
\newtheorem{rem}[thm]{Remark}
\def\Ker{\mathop{\mathrm{Ker}}\nolimits}
\def\piL{\mathop{\pi_1(S^3 \setminus L)}\nolimits}
\newcommand{\lra}{\longrightarrow}
\newcommand{\ra}{\rightarrow}
\newcommand{\Q}{{\Bbb Q}}
\newcommand{\R}{{\Bbb R}}
\newcommand{\Z}{{\Bbb Z}}
\newcommand{\N}{{\Bbb N}}
\newcommand{\A}{{\cal A }}
\newcommand{\pc}[2]{\mbox{$\begin{array}{c}
\includegraphics[scale=#2]{#1.eps}
\end{array}$}}
\begin{document}
\large
\begin{center}{\bf\Large Milnor-Orr invariants from the Kontsevich invariant}.
\end{center}
\vskip 1.5pc
\begin{center}{Takefumi Nosaka\footnote{
E-mail address: {\tt nosaka@math.titech.ac.jp}
}}\end{center}
\vskip 1pc\begin{abstract}\baselineskip=12pt \noindent

As nilpotent studies in knot theory, we focus on invariants of Milnor, Orr, and Kontsevich.
We show that the Orr invariant of degree $ k $ is equivalent to the tree reduction of the Kontsevich invariant of degree $< 2k $.
Furthermore, we will see a close relation between the Orr invariant and the Milnor invariant, and discuss a method of computing these invariants.

\end{abstract}

\begin{center}
\normalsize
\baselineskip=17pt
{\bf Keywords} \\
\ \ \ Knot, Milnor invariant, nilpotent group, Magnus expansion \ \ \
\end{center}

\large
\baselineskip=16pt
\section{Introduction}
In \cite{Mil1,Mil2}, Milnor defines his $\bar{\mu}$-invariants of links, which extract numerical information from the lower central series of the link groups and the link longitudes. The $\bar{\mu}$-invariants have been studied from topological viewpoints (see \cite{IO,CDM,MY} and references therein, and \cite{KN} for a powerful computation). 
Furthermore, as a homotopical approach to $\bar{\mu}$-invariants, Orr \cite{Orr} introduced an invariant of ``based links". This Orr invariant provides obstruction of slicing links in a nilpotent sense \cite{Orr,IO}. However, since the invariant is defined as a homotopy 3-class of a homotopy group, there are few examples of the computation, and it is not clear whether the invariant is properly a generalization of the $\bar{\mu}$-invariants or not

Meanwhile, in quantum topology, a standard way to study nilpotent information is to carefully observe the tree parts of the Kontsevich invariants or LMO functor; see, e.g., \cite{HM,GL,Mas2}. For example, the first non-vanishing term of $\bar{\mu}$-invariants is equal to that of a tree reduction of the Kontsevich invariant \cite{HM}, with a relation to the Chen integral. Concerning the mapping class group, the Johnson-Morita homomorphism and Goldmann Lie algebra can be also nilpotently studied from the LMO functor (see \cite{Mas,Mas2}). Furthermore, such observations of tree parts sometimes approach the fundamental homology 3-class of 3-manifolds, together with relations to Massey products \cite{GL}.

In this paper, inspired by the works of Massuyeau \cite{Mas,Mas2}, we show that the (Milnor-)Orr invariant can be recovered from the Kontsevich invariant. We should note that these invariants are appropriately graded, and that, given $k\in \mathbb{N}$, the Orr invariant of degree $k$ is defined for any based link $L$ whose $\bar{\mu}$-invariants of degree $\leq k$ are zero. The theorem is as follows:
\begin{thm}
[Corollary \ref{thm2244}]\label{thm22442} Given a based link $L$ whose $\bar{\mu}$-invariants of degree $\leq k$ vanish, the Orr invariant of $ L $ is equivalent to the tree-shaped reduction of the Kontsevich invariant of degree $< 2k $.
\end{thm}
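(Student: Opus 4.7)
The plan is to place both invariants in a common algebraic target associated with the free nilpotent Lie algebra on the meridians, and to verify that each extracts the same datum under the stated vanishing hypothesis. Let $F$ denote the free group on the meridians and $F_j$ its lower central series. The vanishing of $\bar\mu$-invariants of degree $\leq k$ guarantees that the meridian map induces isomorphisms $F/F_k \cong \pi_1(S^3\setminus L)/\pi_1(S^3\setminus L)_k$, so the longitudes of $L$ lift canonically into $F_k$, and the key algebraic object to isolate is the class of these lifted longitudes in $F_k/F_{2k}$ modulo a suitable equivalence.

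First, I would algebraize the Orr invariant. Orr's definition is a homotopy 3-class in a plus-construction-type target built from $F/F_k$; under the vanishing hypothesis the obstruction to promoting the nilpotent identification to level $2k$ lives in $F_k/F_{2k}$, and standard obstruction theory (as in \cite{Orr,IO}) identifies this obstruction with the longitudes of $L$. After applying the Magnus expansion one lands in the degree $[k,2k)$ component of a completed free Lie algebra, with the Orr invariant well-defined modulo the action of basis changes in $F$ and of basing choices.

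Second, on the quantum side, I would invoke the Habegger--Masbaum theorem \cite{HM}, refined through the LMO functor by Massuyeau \cite{Mas,Mas2}: the tree part of the Kontsevich invariant, read through a hair/symmetrization map, is a group-like element in the completed free Lie algebra whose logarithm records the longitudes. Under our hypothesis, its degree $<k$ part is zero, so truncating at degree $<2k$ captures exactly the $F_k/F_{2k}$-valued longitudinal datum isolated on the Orr side. A direct Lie-algebraic comparison then matches the two invariants.

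The main obstacle will be aligning the equivalence relations imposed on each side. The Orr invariant is taken modulo basis changes of $F$ compatible with the nilpotent identification and modulo basing choices, whereas the tree-reduced Kontsevich invariant is considered modulo the AS and IHX relations on Jacobi diagrams. IHX translates into the Jacobi identity of the free Lie algebra, so that the diagrammatic quotient becomes a free Lie algebra quotient; the additional automorphisms on the Orr side must then be matched with the inner / meridional conjugation action on this Lie algebra. A related technical point is verifying that the filtration of Jacobi diagrams by number of edges corresponds exactly, in the range $[k,2k)$, to the lower central series filtration; this should follow from a careful degree bookkeeping but needs to be checked on the nose rather than only up to lower-order terms.
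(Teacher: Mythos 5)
Your overall architecture is the same as the paper's: both sides are funnelled through the common datum of the longitudes in $F_k/F_{2k}$, i.e.\ the element $\sum_\ell x_\ell\otimes\lambda_\ell\in\Ker[\bullet,\bullet]_{k,2k}$, and the corollary is obtained by composing the two comparisons. The problem is that both comparisons are asserted rather than proved, and each one is precisely a main theorem of the paper, not available in the references you cite. On the Orr side, the claim that ``standard obstruction theory (as in \cite{Orr,IO}) identifies this obstruction with the longitudes'' is exactly Theorem \ref{thm24}; what \cite{Orr,IO} actually give is only the lowest summand of $\theta_k(L,\tau)$ (Theorem \ref{thdr}~(II)). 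Identifying the full class requires real work: the $H_3$-visible part is obtained via Kodani's infinitesimal Morita--Milnor homomorphism $M_k$ and Massuyeau's fission isomorphism $\Phi$ (Lemma \ref{oo2}), together with a chain-level comparison between the Koszul complex of $\mathfrak{L}/\mathfrak{L}_{\geq k}$ and the group/cellular homology of the link exterior, matching $[t_L]$ with $(f_k)_*[S^3\setminus\overline{T},\partial]$ (Lemma \ref{a2}). Moreover, obstruction-theoretic and homological arguments only see $\mathfrak{H}\circ\theta_k$, i.e.\ $H_3(K_k)$; the top summand $\Z^{qN_{2k-1}-N_{2k}}$ of $\pi_3(K_k)$ lies in $\Ker\mathfrak{H}$ and carries the degree-$(2k-1)$ part of the longitude data. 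Your plan gives no mechanism for this piece; the paper handles it separately by the realization result (Theorem \ref{thdr}~(III)) and additivity, and even then obtains only a bijection $\overline{\Phi}$, with linearity left as a conjecture. A proof that stops at the Hurewicz image loses exactly this part of the statement.

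On the quantum side there is an analogous gap: Habegger--Masbaum's Theorem 6.1 equates only the \emph{first non-vanishing} tree part (degree $k$) with the first non-vanishing Milnor invariant, whereas the corollary needs the whole range of degrees $k\leq j\leq 2k-1$. That extension is Theorem \ref{thm224}, and it does not follow from ``truncating a group-like element whose logarithm records the longitudes'': one has to produce the isomorphism $R$ on $\oplus_{j=k}^{2k-1}\mathcal{P}^t_j(\uparrow^q)$ and verify \eqref{bbc2}. In the paper this uses Massuyeau's group-like expansion $\theta^Z$ built from the Kontsevich invariant of pure braids, the longitude formula \eqref{qqqq2} coming from the doubling property, an injectivity-plus-rank-count argument for the maps $\Upsilon^{(j)}$ (using realization of prescribed Milnor invariants by string links), and finally a reduction of an arbitrary string link of degree $k$ to a pure braid modulo $O(2k)$ via the vanishing statement of \cite{HM}. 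By contrast, the technical worries you flag (matching AS/IHX with the Jacobi identity, basing and basis-change ambiguities) are largely non-issues here, since the paper works with string links with a fixed canonical basing and the identification of $\mathcal{P}^t$ with bracket-kernel data is the standard isomorphism \eqref{bbc}; the genuine difficulties are the two equivalences you treated as citations.
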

\noindent
This theorem is a generalization of the above result \cite{HM} (see Remark \ref{ll}) and gives a topological interpretation of the tree reduction of degree $< 2k $. Moreover, it is natural for us to ask what finite type invariants recover the Orr invariant; as a solution, we suggest a computation of the Orr invariant from HOMFLYPT polynomials (see \S \ref{SHigher}), where this computation is based on \cite{MY}.

Furthermore, we will show the equivalence between the Orr invariant of degree $k$ and Milnor $\bar{\mu}$-invariants of degree $< 2k$; see Theorem \ref{thm24}. Accordingly, while the Orr invariant is a homotopy 3-class, the 3-class turns out to be described by the link longitudes, as is implicitly pointed out in \cite{Mas2,K}; see the figure below as a summary. Our result is analogous to the result \cite{Mas} concerning the mapping class group, which claims an equivalency between ``the Morita homomorphism" of degree $k$ and ``the total Johnson homomorphisms" of degree $< 2k$. Thus, it can be hoped that and the tree parts of the LMO functor can be described in terms of homology 3-classes or algebraic topology.

\begin{center}
\fbox{
\begin{tabular}{c}
\!\!\!\! Milnor $\mu$-invariant \!\!\!\! \\
of degree $<2k$
\end{tabular}} \raisebox{0.95ex}[0pt][0pt]{$\stackrel{\textrm{Theorem \ref{thm24}}}{ =\!\!=\!\!=\!\!=\!\!=\!\!=\!\!=\!\!=\!\!=}$}
\fbox{
\begin{tabular}{c}
\!\!\!\! Orr invariant \!\!\!\!\\
of degree $k$
\end{tabular}}\raisebox{0.95ex}[0pt][0pt]{$\stackrel{\textrm{Corollary \ref{thm2244}}}{ \ = \!\!=\!\!=\!\!=\!\!=\!\!=\!\!=\!\!=\!\!=\!\!=}$}
\fbox{
\begin{tabular}{c}
\!\! \!\!Tree-reduced Kontsevich \!\!\!\! \\
invariant of degree $<2k$
\end{tabular}}

\end{center}

\vskip -1.97pc
\hspace{46.83ex}
$ \leftarrow \!\! \!- \!\! \!- \!\! \!-\!\! \!-\!\!\! - \!\!\! -\!\!\! - \!\!\! - $

\vskip -0.97pc
\hspace{47.85ex}
{\small Recover via}

\vskip -0.47pc
\hspace{47.845ex}
{\small HOMFLYPT \cite{MY}}

This paper is organized as follows. Section 2 reviews Milnor-Orr invariants and states Theorem \ref{thm24}, and Section 3 describes the relation to the Kontsevich invariant. Section 4 gives the proofs. Appendix \ref{SHigher} explains the computation from HOMFLYPT polynomials.

\section{Theorem on Milnor-Orr invariant}
\label{Semb}
The first part of this section reviews the Milnor-Orr invariant, and the second part states Theorem \ref{thm24}.

\subsection{Review of Milnor invariant and Orr invariant}
\label{Semb5}
We will start by reviewing string links. Let $I$ be the interval $[0,1]$, and fix $q \in \N $. A {\it ($q$-component) string link} is a smooth embedding of $q$ disjoint oriented arcs $A_1, \dots, A_q$ in the 3-cube $I^3$, which satisfy the boundary condition $A_i = \{ p_i, q_i\}$, where $p_i =( j /2q , 0,0)$ and $q_i =( j /2q , 0,1) \in I^3 $. We define $SL(q)$ to be the set of string links of $q$-components. Given two string links $T $ and $T'$, we can define another string link, $T \cdot T' \subset [0,1]^2 \times [0,2] \cong I^3$, by connecting $q_i$ in $T$ and $ p_i'$ in $T'$. If $T =\{A_i \}$ is a string link, then an oriented link $L = \{L_1,\dots, L_q\}$ can be defined to be $L_i = A_i \cup a_i$, where $a_i$ is a semi-circle in $S^3 \setminus L $ connecting $p_i$ and $q_i$. We call the link {\it the closure of $T$} and denote it by $\overline{T}$; see Figure \ref{STU2}.

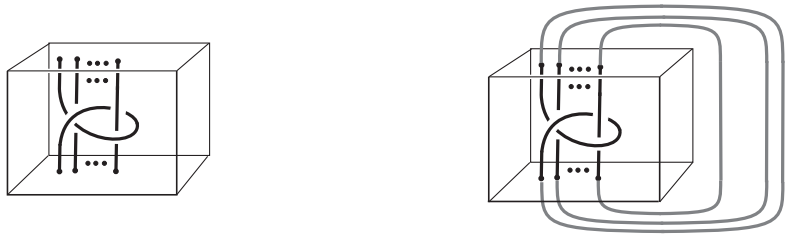
\begin{figure}[b]
\begin{center}
\begin{picture}(50,74)
\put(-156,37){\pc{stringlink}{0.202}}
\put(26,37){\pc{stringlink.closure}{0.204}}
\end{picture}
\end{center}
\vskip -1.17pc
\caption{\label{STU2} A string link and the closure as a link in $S^3$.}
\end{figure}

Next, we will describe the groups used throughout this paper. Let $F$ be the free group with generator $x_1, \dots, x_q$.
For a group $G$, we define $G_1 $ to be $G$ and $G_m $ to be the commutator $ [G_{m-1} , G]$ by induction. If $G$ is the free group $G$, then the projection $p_{m-1}: F/F_m \ra F/F_{m-1}$ implies the central extension,
\begin{equation}\label{kihon2} 0 \lra F_{m-1} /F_m \lra F/F_m \xrightarrow{\ \ p_{m-1}\ \ }F/F_{m-1} \lra 0 \ \ \ \ \ (\mathrm{central \ extension})
\end{equation}
The abelian kernel $F_{m-1} /F_m $ is known to be free with a finite basis.

Now let us explain the $m$-th leading terms of the Milnor invariant, according to \cite{Mil1,IO,KN}. We will suppose that the reader has elementary knowledge of knot theory, as can be found in \cite[\S 1 and \S 12]{CDM}. Given a $q$-component link $L $ in the 3-sphere $S^3$ and $ \ell \leq q $, we can uniquely define the (preferred) longitude $ \mathfrak{l}_{\ell} \in \pi_1 (S^3 \setminus L)$ of the $\ell$-th component. In addition, let $f_2 : \pi_1 (S^3 \setminus L) \ra F/F_2 =\Z^q $ be the abelianization $\mathrm{Ab}$. Furthermore, for $k \in \mathbb{N}$, we assume
\begin{itemize}
\item
\noindent
\textbf{Assumption $\mathfrak{A}_{k}$}. There are homomorphisms $f_s : \pi_1 (S^3 \setminus L) \ra F/F_s $ for $s$ with $s \leq k$, which satisfy the commutative diagram,
$$
{\normalsize
\xymatrix{
\pi_1 (S^3 \setminus L) \ar[d]_{f_2} \ar[dr]_{f_3} \ar[drrr]_{f_4}^{} \ar[drrrrrr]^{f_k}_{\!\!\!\!\!\!\!\!\!\!\!\!\!\!\!\!\!\!\!\!\!\!\!\!\!\!\!\! \cdots \cdots }& & \\
F/F_2 & F/F_3 \ar[l]^{p_2 }& &F/F_4 \ar[ll]^{p_3} &\cdots \cdots \ar[l] & & F/F_{k} \ar[ll]^{p_{k-1} }.
}}$$
\end{itemize}

\noindent
Here, we should note that if there is another extension $f_k' $ instead of $f_{k}$, then $f_k$ equals $f_k'$ up to conjugacy, by centrality. Further,
we should note following proposition.
\begin{prop}
[\cite{Mil2}]\label{ea211} Suppose Assumption $\mathfrak{A}_{k}$. Then, $f_k$ admits a lift $ f_{k+1} : \pi_1 (S^3 \setminus L) \ra F/F_{k+1} $ if and only if all the Milnor invariants of length $< k $ vanish, i.e., $ f_k ( \mathfrak{l}_{\ell} ) =0 $.
\end{prop}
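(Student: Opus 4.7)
The plan is to recast the extension problem cohomologically and reduce it, via the peripheral structure of $X = S^3 \setminus L$, to a commutator computation in the free nilpotent group. Lifting $f_k$ to $f_{k+1}\colon \pi_1(X) \to F/F_{k+1}$ with $p_k \circ f_{k+1} = f_k$ is governed by the obstruction class $\omega(f_k) = f_k^*(e) \in H^2(\pi_1(X); F_{k}/F_{k+1})$, where $e$ is the class of the central extension (\ref{kihon2}) and the coefficients form a trivial $\pi_1(X)$-module. Explicitly, for any set-theoretic lift $\sigma$ of $f_k$, the 2-cocycle $(g,h) \mapsto \sigma(g)\sigma(h)\sigma(gh)^{-1}$ represents $\omega(f_k)$, and the lift $f_{k+1}$ exists if and only if $\omega(f_k) = 0$.

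Next, I would exploit that a link exterior is aspherical and is (homotopy equivalent to) a compact 3-manifold with torus boundary $\partial X = \bigsqcup_{\ell=1}^{q} T_\ell$, $T_\ell = \partial \nu(L_\ell)$. Combining Poincar\'e--Lefschetz duality $H^2(X; A) \cong H_1(X, \partial X; A)$ with the long exact sequence of the pair, the restriction map $H^2(\pi_1(X); A) \to \bigoplus_\ell H^2(\pi_1(T_\ell); A) \cong A^q$ is injective for the trivial module $A = F_k/F_{k+1}$. Hence $\omega(f_k)$ is detected entirely by its restrictions to the peripheral tori.

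On each torus with $\pi_1(T_\ell) = \langle \mathfrak{m}_\ell, \mathfrak{l}_\ell\rangle$, a direct cocycle calculation identifies the restricted class with the commutator $[\widetilde{f_k(\mathfrak{m}_\ell)}, \widetilde{f_k(\mathfrak{l}_\ell)}] \in F_k/F_{k+1}$ for arbitrary lifts to $F/F_{k+1}$. Since $f_2$ is the abelianization, $f_k(\mathfrak{m}_\ell)$ is conjugate to a free generator $x_\ell$, so in the free (Magnus--Witt) graded Lie algebra $\bigoplus_m F_m/F_{m+1}$ this commutator equals $[x_\ell,\, f_k(\mathfrak{l}_\ell)]$. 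Inductively on $k$, the top component of $f_k(\mathfrak{l}_\ell)$ lies in $F_{k-1}/F_k$, and the bracket $[x_\ell,\cdot]\colon F_{k-1}/F_k \to F_k/F_{k+1}$ is injective; thus the obstruction vanishes iff $f_k(\mathfrak{l}_\ell) = 0$, yielding the stated equivalence.

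The main obstacle I expect is the explicit cocycle identification on the boundary tori in the third step, together with the careful use of Lefschetz duality to reduce global vanishing of $\omega(f_k)$ to peripheral vanishing; one must handle the open complement $X$ versus its compact exterior and track the contribution of each meridian--longitude pair precisely. A secondary care point is the $k=2$ base case of the bracket injectivity, where the kernel of $[x_\ell,\cdot]$ on $F/F_2$ is precisely $\Z\cdot x_\ell$, so that the preferred (zero-framing) choice of $\mathfrak{l}_\ell$ is essential for the equivalence to take the clean form stated.
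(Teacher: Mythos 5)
Your argument is correct, but be aware that the paper offers no proof of this proposition at all: it is quoted from Milnor [Mi2], whose original argument is combinatorial‑group‑theoretic rather than cohomological. Milnor derives from the Wirtinger presentation an explicit presentation $\pi_1(S^3\setminus L)/\pi_1(S^3\setminus L)_{k+1}\cong\langle x_1,\dots,x_q\mid [x_\ell,w_\ell]\ (\ell\le q),\ F_{k+1}\rangle$ with $w_\ell$ a word representing the $\ell$‑th longitude, so that a lift $f_{k+1}$ exists precisely when the relations $[x_\ell,f(w_\ell)]=1$ hold in $F/F_{k+1}$; from there both proofs meet at the same algebraic endpoint, namely the injectivity of $\mathrm{ad}(x_\ell)\colon F_{k-1}/F_k\to F_k/F_{k+1}$ in degrees $\ge 2$ (with the degree‑one kernel $\Z\, x_\ell$ absorbed by the zero‑framing convention, as you note). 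Your route replaces the presentation by the obstruction class $f_k^*(e)\in H^2(\pi_1(X);F_k/F_{k+1})$ and uses Lefschetz duality to localize it on the peripheral tori; this is closer in spirit to Igusa--Orr, avoids manipulating presentations, and makes transparent why only the longitudes matter, at the cost of needing the comparison between group and space cohomology. Two small points to tighten: (i) asphericity of $S^3\setminus L$ fails for split links, but you do not actually need it --- Hopf's theorem gives the surjection $H_2(X)\to H_2(\pi_1(X))$ and hence injectivity of $H^2(\pi_1(X);A)\to H^2(X;A)$ for trivial coefficients in all cases, which is all your duality step requires; (ii) the claim that $f_k(\mathfrak{m}_\ell)$ is \emph{conjugate} to $x_\ell$ does not follow from Assumption $\mathfrak{A}_k$ as stated, but you only use $f_k(\mathfrak{m}_\ell)\equiv x_\ell \bmod F_2$, which does follow since $f_k$ lifts the abelianization, and that suffices because $[a,b]\bmod F_{k+1}$ depends only on $a\bmod F_2$ once $b\in F_{k-1}$.
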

Thus, the map $f_k$ sends the preferred longitude $ \mathfrak{l}_{\ell} $ to the center $ F_{k-1} /F_k $. Then, the $q$-tuple,
$$ \bigl( f_k( \mathfrak{l}_1 ), \dots, f_k( \mathfrak{l}_{q} ) \bigr) \in (F_{k-1} /F_k)^q, $$
is called {\it the first non-vanishing Milnor $\mu $-invariant} or {\it the Milnor $\mu $-invariant} of length $k-1 $. Proposition \ref{ea211} implies that $\mu $-invariant is known to be a complete obstruction for lifting $f_{m}$.
The paper \cite{KN} gives an algorithm to describe $f_m$ explicitly, and a method of computing the Milnor invariants. 

We further review the Orr invariant \cite{Orr}, where $L$ satisfies Assumption $\mathfrak{A}_{k+1}$, that is, all the $\mu $-invariants of length $\leq k$ are zero; $f_{k } (\mathfrak{l}_{\ell})=0$. Fix a homomorphism $\tau : F \ra \piL $ that sends each generator $x_{\ell}$ to some meridian $ \mathfrak{m}_{\ell}$ of the $\ell$-th component for $\ell \leq \# L$. This $\tau$ is called {\it a basing}, and the pair $(L,\tau)$ is referred as to {\it a based link}. As examples that we will refer to later, given a string link $T$, the closure $\overline{T}$ has a canonical basing, where $\tau$ is obtained from choosing the loop circling $\{( j/2q , 0, 1)\} $. Furthermore, for a group homomorphism $f: G \ra H$, we will write $f_*: K( G,1 ) \ra K( H,1 )$ for the induced map between Eilenberg-MacLane spaces. We define the space $K_k$ to be the mapping cone,
$$K_k := \mathrm{Cone} \bigl( (f_k \circ \tau )_*: K( F,1) \ra K( F/F_k ,1 )\bigr). $$
Then, from the assumption $f_k(\mathfrak{l}_{\ell})=0$, $f_k$ gives rise to a continuous map $\rho_L: S^3 \ra K_k . $ It is reasonable to consider the homotopy 3-class,
\begin{equation}\label{hom3} \theta_k (L, \tau) := [ \rho_L ] \in \pi_3 ( K_k ) ,
\end{equation}
which we call {\it the Orr invariant}. The following is a list of known results on the invariant and $\pi_3 ( K_k ).$

\begin{thm}[\cite{Orr,IO}]\label{thdr}
\begin{enumerate}[(I)]
\item Let $N_h \in \N $ be the rank of $H_2 (F/F_h;\Z ) =F_h/F_{h+1 }$. The following are isomorphisms on $\pi_3(K_k)$ and on $H_3(K_k)$:
$$\pi_3(K_k) \cong \bigoplus_{h=k}^{2k-1} \Z^{q N_{h} - N_{h+1} } , \ \ \ \ \ \ \
H_3(K_k;\Z )\cong H_3(K(F/F_k,1 );\Z) \cong \bigoplus_{h=k}^{2k-2} \Z^{q N_{h} - N_{h+1} } .$$
Furthermore, the Hurewicz homomorphism $\mathfrak{H}: \pi_3(K_k)\ra H_3(K_k;\Z)$ is equal to the projection according to the direct sums on the right-hand sides.
\item The lowest summand of $\theta_k (L, \tau) $ is equivalent to the Milnor invariant of length $k$; see \cite{Orr} or \cite[\S 10]{IO} for details.

\item For any element $ \kappa \in \pi_3(K_k ) $, there exist a link $L$ and a homomorphism $g_k : \pi_1 (S^3 \setminus L) \ra F/F_k$ satisfying $ \theta_k (L, \tau) = \kappa. $

\item The Orr invariant has additivity with respect to ``band connected sums"; see \cite[\S 3]{Orr}. As a special case, for two string links $T_1$ and $T_2$ such that the closures $\overline{T}_1$ and $\overline{T}_2$ satisfy Assumption $\mathfrak{A}_{k+1}$, we have $\theta_k ( \overline{T_1 \cdot T_2}, \tau_1 ) = \theta_k ( \overline{T}_1, \tau_1) + \theta_k ( \overline{T}_2, \tau_2) . $

\item Let $\iota_k: K_k \ra K_{k+1}$ be the continuous map arising from the projection $F/F_k \ra F/F_{k+1}$. The Orr invariant has functoriality. To be precise, if $L$ satisfies $\mathfrak{A}_{k+1}$, then the equality $(\iota_{k})_* \bigl( \theta_k (L, \tau)\bigr) = \theta_{k+1}(L, \tau) $ holds in $\mathrm{Im} (\iota_{k})_* \cap \pi_3(K_{k+1}) $.
\end{enumerate}
\end{thm}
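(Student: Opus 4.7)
The statement assembles classical results of Orr and of Igusa-Orr, so my plan is to re-derive each item from the mapping-cone definition of $K_k$ and the central extension (1.1). The overarching strategy is: first compute $H_\bullet(K_k)$ and $\pi_3(K_k)$ (item (I)); then identify the Hurewicz image of $\theta_k(L,\tau)$ with the longitudinal data (items (II), (III)); then deduce the formal properties (IV), (V) directly from the cone construction.

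For (I), I would exploit that $K(F,1) \simeq \bigvee_{i=1}^q S^1$, so its reduced homology vanishes above degree one. The long exact sequence of the pair $(K_k, K(F,1))$ then yields $H_n(K_k;\Z) \cong H_n(F/F_k;\Z)$ for $n \geq 3$ and a short exact sequence computing $H_2$. Since $\tau(x_\ell)$ is a meridian, the composition $F \ra F/F_k \ra \Z^q$ is the standard abelianization, so $H_1(K(F,1)) \ra H_1(K(F/F_k,1))$ is an isomorphism; combined with Hopf's formula $H_2(F/F_k;\Z) \cong F_k/F_{k+1}$, this gives $H_2(K_k) \cong F_k/F_{k+1}$. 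Iterating Hopf's formula via the Lyndon-Hochschild-Serre spectral sequence for the central extension (1.1), one inductively obtains
$$
H_3(F/F_k;\Z) \cong \bigoplus_{h=k}^{2k-2} \Z^{qN_h - N_{h+1}}.
$$
For $\pi_3(K_k)$, the space $K_k$ is simply connected (since $H_1$ vanishes) and nilpotent, and I would work through its Postnikov tower: after killing $\pi_2 = H_2(K_k) = F_k/F_{k+1}$ by attaching $3$-cells, one obtains an additional summand $\Z^{qN_{2k-1} - N_{2k}}$ in $\pi_3$ beyond $H_3$, and the Hurewicz map becomes the projection onto the first $k-1$ summands.

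For (II), the image $\mathfrak{H}(\theta_k(L,\tau)) \in H_3(K_k;\Z)$ is represented by the fundamental class of $S^3$ pushed forward along $\rho_L$. Standard geometric manipulations (cutting along tubular neighborhoods of $L$ and analyzing the boundary tori) express this class in terms of products of longitudes $f_k(\mathfrak{l}_\ell) \in F_{k-1}/F_k$; its component in the lowest summand $\Z^{qN_k - N_{k+1}}$ is then precisely the Milnor $\mu$-invariant of length $k$ by Proposition 2.1. Part (III) I would take directly from Orr's realization argument, which builds explicit string-link representatives for each generator via iterated Bing doubling and clasper surgery. Part (IV) follows because the closure $\overline{T_1 \cdot T_2}$ decomposes as a band-connected sum, so the map into $K_k$ can be homotoped to a wedge, yielding additivity in $\pi_3$. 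Part (V) is essentially tautological: the projection $F/F_k \ra F/F_{k+1}$ induces $\iota_k\colon K_k \ra K_{k+1}$ commuting with the construction of $\rho_L$, hence with $\theta_k$.

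The main obstacle is the homotopy calculation in (I), specifically the passage from $H_3(K_k)$ to $\pi_3(K_k)$: controlling the extra summand $\Z^{qN_{2k-1}-N_{2k}}$ requires a careful Postnikov analysis of the nilpotent space $K_k$ and depends on the structure of the first nontrivial $k$-invariant of its tower. Everything else---the homology of nilpotent quotients, additivity, and functoriality---is either a standard consequence of the mapping-cone formalism or follows formally from Proposition 2.1.
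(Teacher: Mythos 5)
This theorem is quoted background: the paper gives no proof of it at all, attributing everything to Orr and Igusa--Orr, so the only question is whether your outline would actually reconstitute those results. For the formal items it is fine in spirit: (IV) is indeed Orr's wedge/band-sum argument, (V) is essentially functoriality of the cone construction, and your cofiber-sequence computation of $H_n(K_k)\cong H_n(F/F_k)$ for $n\geq 3$, $H_2(K_k)\cong F_k/F_{k+1}$, $H_1(K_k)=0$ is correct and standard.

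The genuine gap is in (I), and it is twofold. First, the identity $H_3(F/F_k;\Z)\cong\bigoplus_{h=k}^{2k-2}\Z^{qN_h-N_{h+1}}$ does not follow by ``iterating Hopf's formula via the Lyndon--Hochschild--Serre spectral sequence'' for the central extension \eqref{kihon2}: the $E_2$-page involves $H_p(F/F_{m-1};F_{m-1}/F_m)$ and the $d_2$, $d_3$ differentials together with the extension problems are exactly the hard content; this computation is the main theorem of the Igusa--Orr paper (previously only conjectured), proved there with their picture calculus, and no naive induction produces the stated answer. Second, the passage from $H_3(K_k)$ to $\pi_3(K_k)$ --- the extra top summand $\Z^{qN_{2k-1}-N_{2k}}$ and the claim that the Hurewicz map is the projection onto the remaining summands --- is precisely what you flag as ``the main obstacle'' and never resolve; a Postnikov/Whitehead-sequence analysis (e.g.\ $H_4(K_k)\to\Gamma(\pi_2 K_k)\to\pi_3(K_k)\to H_3(K_k)\to 0$) requires as input the homology of higher nilpotent quotients, i.e.\ again the full Igusa--Orr computation, so it cannot be deferred as a technical afterthought. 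Since (III) (realization of every class in $\pi_3(K_k)$) presupposes knowing $\pi_3(K_k)$, and your (II) rests on the direct-sum decomposition from (I), the core of the theorem remains unproved in your proposal; what you have is a correct framing plus citations for exactly the parts that carry the weight.
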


Next, we mention the homological reduction of the Orr invariant $ \theta_k (L)$ via the Hurewicz map $\mathfrak{H}: \pi_3(K_k)\ra H_3(K_k;\Z).$ Note that the inclusion $K(F/F_k,1) \ra K_k $ induces the isomorphism,
$$P^{\rm gr} : H_3(K (F/F_k,1), \sqcup^q K(\Z,1) ;\Z) \cong H_3(K_k;\Z), $$
from the relative homology. To summarize, the value $\mathfrak{H} ( \theta_k (L)$) is the reduction of $ \theta_k (L)$ without the top summand $\Z^{q N_{2k-1} - N_{2k} } $. Moreover, by definition, this $\mathfrak{H} ( \theta_k (L)$) can be regarded as the pushforward of the fundamental 3-class $ [S^3 \setminus L , \partial ( S^3 \setminus L)] \in H_3(S^3 \setminus L , \partial ( S^3 \setminus L) ;\Z )\cong \Z $:
\begin{equation}\label{hom553} P^{\rm gr} \circ (f_k)_* [S^3 \setminus L , \partial ( S^3 \setminus L)] = \mathfrak{H} ( \theta_k (L,\tau)) \in H_3(K_k;\Z ).
\end{equation}
The author \cite{Nos} showed that the cohomology $H^3(K (F/F_k,1)) $ are generated by some Massey products; thus, the reduction \eqref{hom553} are characterized by some Massey products of $S^3 \setminus L $.

\subsection{Results: Orr invariant of higher invariants}\label{revi3}
In order to state the theorem, let us briefly review the Milnor $\mu$-invariant for string links (see \cite{HM,K,Le}). For a string link $T \in SL(q)$, let $y_j \in \pi_1 ( [0,1]^3 \setminus T) $ be an element arising from the loop circling $\{( j/2q , 0, 1)\} $; Let $G_m$ be the $m$-th nilpotent quotient of $\pi_1 ( [0,1]^3 \setminus T) $. Then, as is shown \cite{Mil2,Le}, the homomorphism $ F \ra \pi_1 ( [0,1]^3 \setminus T) $ which sends $ x_i $ to $y_i$ descends to an isomorphism between the $m$-th nilpotent quotients:
\begin{equation}\label{dd} \phi_*: F/F_m \cong \pi_1 ( [0,1]^3 \setminus T)/G_m, \textrm{ \ for any } m \in \N.\end{equation}
Here, we should note that, if $T$ is a pure braid, this $\phi_*$ is the identity map. Furthermore, a framing of the $\ell $-th component of $T$ defines a parallel curve which determines an element, $\lambda_\ell \in \pi_1 ([0,1]^3 \setminus T ) $. This $\lambda_\ell $ is referred as to {\it the $\ell$-th longitude of $T$}. We call the reduction $\phi_*^{-1}(\lambda_\ell) \in F /F_m $ modulo $ F_m $ {\it the $\mu$-invariant of $T$} (of degree $\leq m$). Later, we will omit writing $\phi_*^{-1} $ for simplicity. We should notice, from the definitions, that the closure $\overline{T}$ of a string link $T$ satisfies $\mathfrak{A }_{k+1} $ if and only if $ \lambda_j $ lies in $F_k $, and $\lambda_j = f_{k+1} ( \mathfrak{l}_j ) \in F_k / F_{k +1}$ modulo $ F_{k +1}$.

Thus, for such a string link $T$, as in the paper \cite{Le} of Levine, it is reasonable to consider the invariant, $\lambda_j $ modulo $ F_{2k}$. Notice that $F_k/F_{2k}$ is abelian, since $[F_k,F_k] \subset F_{2k}$. Furthermore, as can be seen from \cite[Proposition 4]{Le}, we can verify that the equality,
\begin{equation}\label{le} [x_1, \lambda_1 ][x_2, \lambda_2] \cdots [x_q, \lambda_q] =1 \in F,
\end{equation}
always holds. Thus, for $m \leq 2k$, the sum,
$$ \sum_{j=1}^q(x_j\otimes \lambda_j ) \in \Z^q \otimes_{\Z } F_k/F_{m} \textrm{ modulo } F_{m},$$
is contained in the kernel of the commuting operator,
$$ [ \bullet , \bullet ]_{k,m} : F/F_2 \otimes F_k/F_{m} \lra F_{k+1} /F_{m+1}; \ \ x\otimes y \longmapsto xy x^{-1} y^{-1}. $$
This operation will be used in many times. Now let us show the equivalence of Milnor and Orr invariants:
\begin{thm}
[{See \S \ref{Sher33} for the proof}]\label{thm24}
(I) There is a $\Q$-vector isomorphism,
$$ \Phi \circ \eta^{-1}: \Q \otimes \Ker ([ \bullet , \bullet ]_{ k, 2k -1 } ) \stackrel{\sim }{\lra } H_3(F/F_k;\Q ), $$
such that the following holds for any string link $T$ satisfying $\mathfrak{A }_{k+1} $ of the closure $\overline{T}$:
$$\Phi \circ \eta^{-1} \bigl( (x_1 \otimes \lambda_1)+ \cdots + (x_q \otimes \lambda_q ) \bigr) = \mathfrak{H} \circ \theta_k (\overline{T} , \tau). $$
(II) Furthermore, concerning the homotopy group $\pi_3 (K_k )$, there is a bijection
$$ \overline{\Phi} : \Q \otimes \Ker ([ \bullet , \bullet ]_{ k, 2k } ) \stackrel{\sim }{\lra } \pi_3(K_k) \otimes \Q, $$ as an extention of $\Phi \circ \eta^{-1}$,
such that a similar equality $ \overline{\Phi} \bigl( (x_1 \otimes\lambda_1)+ \cdots + (x_q \otimes \lambda_q ) \bigr) = \theta_k(\overline{T} , \tau)$ holds for any string link $T$ satisfying $\mathfrak{A }_{k+1} $ of the closure $\overline{T}$.
\end{thm}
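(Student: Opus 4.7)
The plan is to construct $\Phi\circ\eta^{-1}$ in part (I) as an explicit isomorphism from $\Q\otimes\Ker([\bullet,\bullet]_{k,2k-1})$ to $H_3(F/F_k;\Q)$, identify the string-link value with $\mathfrak{H}\circ\theta_k$ through the fundamental-class pushforward (\ref{hom553}), and then lift everything one filtration degree for part (II).

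For $\Phi$, I would exploit the geometric analogy between an element of $\Ker([\bullet,\bullet]_{k,2k-1})$ and the peripheral system of a based link. Given $\xi=\sum_j a_j\otimes b_j$ in the kernel, the condition $\prod_j[a_j,b_j]\equiv 1\pmod{F_{2k}}$ formally mimics the defining relation of the peripheral structure of a link exterior; from this data one can assemble a 3-cycle in the bar complex of $F/F_{2k-1}$, obtained by gluing ``torus-type'' 2-chains $[a_j|b_j]-[b_j|a_j]$ along a 3-filler witnessing the product relation. Pushing through the surjection $F/F_{2k-1}\to F/F_k$ produces the class $\Phi(\xi)\in H_3(F/F_k;\Q)$. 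The map $\eta$ I would identify with the natural edge/transgression map of the Hochschild--Serre spectral sequence for the central extensions (\ref{kihon2}) (iterated between degrees $k$ and $2k-1$), which rationally identifies the relevant subquotient of $H_3(F/F_k;\Q)$ with the kernel in question.

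The isomorphy claim would then be settled by a dimension count via Witt's formula: $\dim_\Q\Ker([\bullet,\bullet]_{k,2k-1})=\sum_{h=k}^{2k-2}(qN_h-N_{h+1})=\dim_\Q H_3(F/F_k;\Q)$ by Theorem~\ref{thdr}(I). For the Orr-invariant identity, given a string link $T$ with $\overline{T}$ satisfying $\mathfrak{A}_{k+1}$, the relation (\ref{le}) is exactly the $f_k$-image of the peripheral Wirtinger relation in $\pi_1(S^3\setminus\overline{T})$; combined with (\ref{dd}), this shows that the 3-cycle defining $\Phi(\sum_j x_j\otimes\lambda_j)$ is the pushforward $(f_k)_*[S^3\setminus\overline{T},\partial]$. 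Applying (\ref{hom553}) yields the required equality.

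For part (II), the same recipe on $\Ker([\bullet,\bullet]_{k,2k})$ defines a candidate $\overline{\Phi}$, and the rank $\sum_{h=k}^{2k-1}(qN_h-N_{h+1})$ matches $\rank\pi_3(K_k)$ by Theorem~\ref{thdr}(I). The main obstacle is the extra top summand $\Z^{qN_{2k-1}-N_{2k}}$ of $\pi_3(K_k)$: it lies in the kernel of the Hurewicz map and carries homotopical (not homological) data, so the $\overline{\Phi}$-preimage must be promoted from a mere 3-cycle to an actual spherical class $S^3\to K_k$. I would realize each rational generator of $\Ker([\bullet,\bullet]_{k,2k})$ by a specific string link (possible by Theorem~\ref{thdr}(III), together with additivity (IV) and functoriality (V)), define $\overline{\Phi}$ via such realizations, and verify well-definedness on the kernel. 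The identity $\overline{\Phi}(\sum_j x_j\otimes\lambda_j)=\theta_k(\overline{T},\tau)$ then reduces modulo the top summand to part (I), with the remaining indeterminacy pinned down by additivity (IV) applied to band-connected-sum decompositions.
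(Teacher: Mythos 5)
Your overall architecture matches the paper's: part (I) is proved by building a Koszul-complex 3-filler $t_L$ for the 2-form $\sum_\ell X_\ell\wedge\lambda_\ell$ (this is exactly the infinitesimal Morita--Milnor class $M_k(T)$ of \eqref{bbc9}), and part (II) is proved by realizing a basis of $H_3(F/F_k;\Z)$ by string links via Theorem \ref{thdr}(III) and invoking additivity. However, there is a genuine gap at the isomorphy claim in (I). A dimension count ($\dim_\Q\Ker([\bullet,\bullet]_{k,2k-1})=\sum_{h=k}^{2k-2}(qN_h-N_{h+1})=\rank H_3(F/F_k)$) shows only that source and target have equal dimension; it does not show that your concretely constructed filler map is injective or surjective, and that is precisely the nontrivial content here. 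The paper obtains the isomorphism as a composite of two cited nontrivial results: Levine's isomorphism \eqref{bbc} between tree Jacobi diagrams $\mathcal{J}^t_j(q)$ and $\Ker[\bullet,\bullet]_{j,j+1}$, and Massuyeau's theorem (Theorem \ref{thma1}) that the fission map $\Phi$ is an isomorphism onto $H_3(\mathfrak{L}/\mathfrak{L}_{\geq k})$. Without these (or an equivalent injectivity argument), your $\Phi$ could a priori have a kernel. Relatedly, your filler construction needs a well-definedness check even in part (I): two fillers of $\sigma_L$ in $\Lambda^3(\mathfrak{L}/\mathfrak{L}_{\geq 2k-1})$ differ by a 3-cycle, and one must show its reduction dies in $H_3(\mathfrak{L}/\mathfrak{L}_{\geq k})$ (this is \cite[Lemma 5.1.2]{K} in the paper); you flag well-definedness only in part (II).

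A second, smaller underestimate: you treat the identification of the filler class with $(f_k)_*[S^3\setminus\overline{T},\partial]$ as immediate from \eqref{le} and \eqref{hom553}. This is the right idea, but it is where the paper does real work (Lemma \ref{a2}): one must transport the Koszul-complex filler through Massuyeau's quasi-isomorphism $\kappa:\bigwedge^*(\mathcal{L}/\mathcal{L}_{\geq k})\to C_*^{\rm gr}(F/F_k)$, match the wedge $X_\ell\wedge\lambda_\ell$ with the cross product $\mathfrak{m}_\ell\times\mathfrak{l}_\ell$ in the group complex relative to the peripheral subgroups $K_\ell$, and then compare with the cellular fundamental class of the link exterior. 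Your "torus-type 2-chains glued along a 3-filler" is morally this argument, but as written it skips the chain-level comparison that makes the two fillers represent the same class. Your treatment of part (II) is essentially the paper's and is fine modulo the same caveat that the map being inverted in (I) must first be known to be an isomorphism.
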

We conjecture that the bijection in (II) is an isomorphism. However, from Theorem \ref{thdr} (III), we have the realizability result as a generalization of \cite[Proposition 5]{Le}:
\begin{cor}\label{cor24}
Let $(\alpha_1,\dots, \alpha_q ) \in F_k/F_{2k} $ satisfy $[x_1, \alpha_1 ]\cdots [x_q, \alpha_q] =1 \in F/F_{2k}$. There exists a string link with $\bar{\mu}$-invariants $(\lambda_1, \dots, \lambda_q)\in (F_k)^q $ such that $ \lambda_j \equiv \alpha_j $ modulo $F_{2k}$.
\end{cor}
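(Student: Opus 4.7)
The plan is to combine the realizability Theorem~\ref{thdr}(III) for the Orr invariant with the bijection $\overline{\Phi}$ of Theorem~\ref{thm24}(II). Given $(\alpha_1,\dots,\alpha_q)$ satisfying $\prod_j[x_j,\alpha_j]=1\in F/F_{2k}$, I would set
\[
\beta := \sum_{j=1}^q x_j\otimes\alpha_j \in \Ker([\bullet,\bullet]_{k,2k}), \qquad \kappa := \overline{\Phi}(\beta) \in \pi_3(K_k)\otimes\Q,
\]
apply Theorem~\ref{thdr}(III) to produce a link $L$ with $\theta_k(L,\tau) = \kappa$, and present $L$ as the closure $\overline{T}$ of a $q$-strand string link $T$. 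Every $q$-component link admits such a presentation, and two basings of a fixed link differ only by meridian conjugation, which induces an inner automorphism of $F/F_k$ and hence does not alter the Orr class in $\pi_3(K_k)$. Then Theorem~\ref{thm24}(II) applied to $T$ gives
\[
\overline{\Phi}\Bigl(\sum_j x_j\otimes\lambda_j(T)\Bigr) = \theta_k(\overline{T},\tau) = \kappa = \overline{\Phi}(\beta),
\]
and since $\overline{\Phi}$ is injective (being a $\Q$-isomorphism) and $\{x_1,\dots,x_q\}$ is a free basis of $\Z^q$, we conclude $\lambda_j(T)\equiv\alpha_j$ modulo $F_{2k}$ for each $j$, as required.

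The main obstacle is the \emph{integrality} of $\kappa$: since $\overline{\Phi}$ is only asserted to be a bijection after tensoring with $\Q$, a priori $\kappa$ lies in $\pi_3(K_k)\otimes\Q$ rather than in the integer lattice $\pi_3(K_k)$ needed to invoke Theorem~\ref{thdr}(III). I would address this by inspecting the explicit construction of $\overline{\Phi}$ in Section~\ref{Sher33} and verifying it is defined over $\Z$ on $\Ker([\bullet,\bullet]_{k,2k})$, so that integer elements $\beta$ map to integer elements $\kappa$.

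As motivation and partial verification, one can implement an inductive realization on $m$ with $k\leq m \leq 2k-2$: Le's Proposition~5 in \cite{Le} handles the base case $m=k$, giving a string link $T_k$ with $\lambda_j(T_k)\equiv\alpha_j\pmod{F_{k+1}}$; at each inductive step one sets $\bar\delta_j := \alpha_j - \lambda_j(T_m)\in F_m/F_{m+1}$, verifies $\sum_j[x_j,\bar\delta_j]\equiv 0\pmod{F_{m+2}}$ from the hypothesis (valid when $m+2 \leq 2k$) and the identity \eqref{le}, applies Le at level $m$ to obtain a corrector $T'_m$, and concatenates $T_{m+1} := T_m\cdot T'_m$. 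This yields realization of $\alpha_j$ modulo $F_{2k-1}$, but the final step to $F_{2k}$ lies beyond the reach of the inductive approach and genuinely requires the full force of Theorem~\ref{thdr}(III).
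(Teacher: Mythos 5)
Your proposal is correct and follows essentially the same route as the paper, which derives the corollary directly by combining the realizability statement of Theorem~\ref{thdr}(III) with the bijection $\overline{\Phi}$ of Theorem~\ref{thm24}(II); the paper gives no more detail than this, so your fleshed-out version (including the observation that the rationality of $\overline{\Phi}$ versus the integrality needed for Theorem~\ref{thdr}(III) must be checked) is a faithful and indeed slightly more careful rendering of the intended argument.
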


\section{As a tree reduction of the Kontsevich invariant}
\label{2thm}
As described in the Introduction, we will relate the $\bar{\mu}$-invariants with the Kontsevich invariant.

\

{\bf Notation}. Throughout this paper, the expression $O(n)$ will be used to denote terms of degree greater than or equal to $n$.

\subsection{A brief review of the Kontsevich invariants of (string) links}
\label{review}
Let us start by briefly reviewing the definition of the $\Q$-vector space $\A(\uparrow^q)$, where {\it a chord diagram (of $q$-components)} is a union $ (\sqcup^q_{j=1} [0,1]) \cup \Gamma$ such that $\Gamma$ is a uni-trivalent graph, whose univalent vertices lie in the interior of $ (\sqcup^q_{j=1} [0,1])$, and each component of $\Gamma$ is required to have a univalent vertex. It is customary to refer to the components of $\Gamma$ as dashed. Then, $\A(\uparrow^q)$ is defined by the $\Q$-vector space generated by all chord diagrams subject to the STU, AS, and IHX relations. The three relations are described in Figure \ref{STU}. The space $\A( \uparrow^q )$ is graded by the degree, where the degree of a diagram is half the number of vertices of $\Gamma $. We denote the subspace of $\A(\uparrow^q)$ of degree $n$ by $\A_n(\uparrow^q ) $. By abuse of notation, we will denote the graded completion of $\A(\uparrow^q)$ by $\A(\uparrow^q)$ as well. We define $\A^t(\uparrow^q)$ as the subspace of $\A(\uparrow^q)$ generated by the chord diagram such that all trivalent diagrams containing a non-simply connected dashed component are considered relations. Furthermore, the stucking connection of $(\sqcup^q_{j=1} [0,1])$ and $(\sqcup^q_{j=1} [0,1])$ gives rise to a ring structure of $\A(\uparrow^q)$. Moreover, there is a cocommutative multiplication $\Delta : \A(\uparrow^q) \ra \A(\uparrow^q)^{\otimes 2 } $ (see, e.g., \cite[Chapter 4]{CDM}, for the details), and $\A(\uparrow^q )$ is made into a Hopf algebra.

\begin{figure}[tpb]
\begin{center}
\begin{picture}(50,74)
\put(-174,38){\large \ \ \ \ $=$}
\put(-94,39){\large $- $}

\put(11,38){\large \ \ \ \ $= \ \!\! -$}
\put(128,38){\large \ \ \ \ \ \ $= \ \ \ \ \ \ \ \ \ \ \ \ \ \ -$}

\put(-16,37){\pc{AS2}{0.2532}}
\put(-206,37){\pc{IHX3}{0.354}}
\put(101,37){\pc{STU3}{0.354}}
\end{picture}
\end{center}
\vskip -1.7pc
\caption{\label{STU} The IHX, AS, and STU relations among chord diagrams. }
\end{figure}
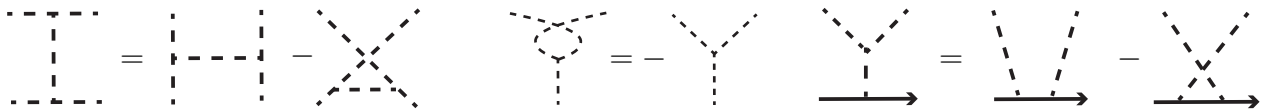

There are several formulations of the Kontsevich invariant.
In this paper, we emply 
the Kontsevich invariant defined for ``$q$-tangles", as in \cite{HM}.
Since we can chose an injection from the set of string links into the set of $q$-tangles which is invariant with respect to the composite of ($q$-)tangles,
we can regard the Kontsevich invariant of $q$-tangles that of string links (Throughout this paper, we fix such an injection).
In this paper, we only consider some of the properties of the Kontsevich invariant of string links $T$.
Thus, while we refer the reader to, e.g., \cite[\S\S 8--10]{CDM}, \cite[\S 6]{Mas2}, and references therein for the definition of the Kontsevich invariant for $q$-tangles, here are the properties that we will use later.
\begin{enumerate}[(I)]
\item The invariant, $ Z(T)$, is defined as an element of $ \A (\uparrow^q)$.
\item $Z(T) $ is multiplicative, i.e., $Z(T_1) Z(T_2) = Z (T_1 \cdot T_2)$ holds for two string links $T_1 , T_2$.
\item Every $Z(T)$ is group-like in $ \A (\uparrow^q)$, i.e., $\Delta(Z(T) ) = Z(T) \otimes Z(T)$.
\item (Doubling Formula) For $i \leq q$ and $T \in \A (\uparrow^q)$, one has that
\[ \Delta_i (Z(T))=Z ( D_i(T)) \in \mathcal{A}(\uparrow^{q+1})\]
where $D_i (T) $ denotes the double of $T $ along the $i$-th component, $C_i $, and $\Delta_i $ of a diagram is the sum over all lifts of vertices on $C_i$, to vertices on the two components over $C_i$.
\end{enumerate}


Recalling the tree subspace $\A^t(\uparrow^q)$, we take the projection $p^t: \A (q) \ra \A^t(\uparrow^q)$. We denote by $Z^t_{<m}(T)$ the composite $p^t \circ Z (T)$ subject to $O(m)$. In other words, this $Z^t_{<m}(T)$ is a tree reduction of the Kontsevich invariant $ Z_{<m}(T)$.

Furthermore, we need to use the notion of primitive subspaces. Here, $m \in \mathcal{A}(\uparrow^q)$ is called {\it primitive} if $\Delta(m) = 1 \otimes m + m \otimes 1 $. We denote by $\mathcal{P}^t(\uparrow^q)$ the subspace of primitive elements of $\A^t (\uparrow^q)$, and by $\mathcal{P}^t_{h}(\uparrow^q )$ the subspace of degree $h$. As is known, $\mathcal{P}^t(q )$ is the graded subspace of $\A^t (\uparrow^q)$ generated by chord diagrams such that the the dashed graph $\Gamma $ is simply connected.
Furthermore, the rank of $\mathcal{P}^t_{h}(\uparrow^q )$ is known to be $q N_h-N_{h+1}$; see \eqref{bbcc} and Theorem \ref{thma1}.

\subsection{Results}
\label{review5}
Before stating the theorem, we should mention the following easily proven lemma.
\begin{lem}\label{lem122}
(1) Fix $k \in \Z$. Every elements $a , b $ in $\oplus_{h=k}^{2k-1} \A_h (\uparrow^q) $ satisfy $(1+a) \cdot (1+b )\equiv 1+a + b+ O (2k) \in \A (\uparrow^q)$.

\noindent
(2) \ Let $a \in \A (\uparrow^q) $ satisfy $ a = 1+ O(k )$, and $\Delta (a) = a \otimes a$. If we decompose $a =1 +b +c $ such that $b \in \A_{< 2k } (\uparrow^q) $ and $c\in O(2k)$, then $b$ is primitive.
\end{lem}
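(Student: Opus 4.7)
The plan is to handle both parts by direct degree bookkeeping in the completed graded Hopf algebra $\A(\uparrow^q)$, exploiting that both the multiplication and the comultiplication preserve total degree.

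For part (1), I would simply expand $(1+a)(1+b) = 1 + a + b + ab$ and observe that every homogeneous summand of $ab$ is a product of a piece of degree $\geq k$ with another piece of degree $\geq k$, hence has degree $\geq 2k$. This immediately gives $ab \in O(2k)$, and the desired congruence follows.

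For part (2), the first step is to note that since $a = 1 + O(k)$ and $c \in O(2k)$, the piece $b$ is supported in degrees $k \leq h \leq 2k-1$. Next, I would apply $\Delta$ to the decomposition $a = 1 + b + c$ and separately expand $a \otimes a$; the group-like identity $\Delta(a) = a \otimes a$ then rewrites as
\[ \Delta(b) - 1 \otimes b - b \otimes 1 = b \otimes b + 1 \otimes c + c \otimes 1 + b \otimes c + c \otimes b + c \otimes c - \Delta(c). \]
The left-hand side sits in total degrees $<2k$, because $\Delta$ preserves total degree and $b$ itself has degree $<2k$. Every term on the right-hand side, however, has total degree $\geq 2k$: for $b \otimes b$ this is because both tensor factors have degree $\geq k$, and for every summand involving $c$ (including $\Delta(c)$) it is because $c \in O(2k)$. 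Since the $<2k$ and $\geq 2k$ parts of $\A(\uparrow^q)^{\otimes 2}$ are disjoint, both sides must vanish; in particular $\Delta(b) = 1 \otimes b + b \otimes 1$, so $b$ is primitive.

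The main — and essentially only — obstacle is making this total-degree comparison precise in the completed tensor square of $\A(\uparrow^q)$. Once the filtration is set up, the argument is purely formal and makes no use of the IHX, STU, or AS relations.
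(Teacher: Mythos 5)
Your argument is correct: both parts follow from the fact that the product and coproduct on $\A(\uparrow^q)$ are degree-preserving, so $ab\in O(2k)$ in (1), and in (2) the identity $\Delta(b)-1\otimes b-b\otimes 1 = b\otimes b + (\text{terms involving } c) - \Delta(c)$ equates an element of total degree $<2k$ with one of total degree $\geq 2k$, forcing both to vanish. The paper states the lemma as ``easily proven'' and gives no proof, and your degree-bookkeeping is exactly the intended routine verification, including the needed observation that $b$ is supported in degrees between $k$ and $2k-1$.
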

As is known \cite{HM}, if a string link $T$ satisfies $\mathfrak{A }_{k+1} $ of the closure $\overline{T}$, $Z^t(T)=1+O(k -1 )$; thus, we can see that $ Z^t(T)_{< 2k } -1 \in \mathcal{P}^t_{< 2k }(q ) $ from property (III). The theorem is as follows:
\begin{thm}\label{thm224}
There is a linear isomorphism $R : \oplus_{j= k}^{2k-1} \mathcal{P}_{j}^t ( q ) \ra \Ker([\bullet,\bullet ]_{k,2k } \otimes \Q ) $ such that the following holds for any string link $T$ satisfying $\mathfrak{A }_{k+1} $ of the closure $\overline{T}$:
\begin{equation}\label{bbc2}R ( Z^t_{< 2k } (T) -1 ) = x_1 \otimes \lambda_1 + \cdots + x_q \otimes \lambda_q .
\end{equation} \end{thm}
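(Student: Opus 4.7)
Plan. My approach is to construct $R$ combinatorially from rooted-tree bracketings, verify it lands in the bracket kernel via a generalized Jacobi identity, identify it as a $\Q$-isomorphism by a dimension count, and then establish \eqref{bbc2} by iterating the leading-term identification of Habegger--Masbaum.

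\textbf{Construction of $R$.} A primitive tree $D\in\mathcal{P}^t_h(\uparrow^q)$ has a simply connected dashed graph with $h+1$ univalent vertices on the strands. For each leaf $v$ on the $i_v$-th strand, rooting at $v$ turns $D$ into a rooted binary tree whose remaining leaves are labeled by $x_1,\dots,x_q$; reading it as an iterated Lie bracket gives $\mathrm{br}(D,v)\in F_h/F_{h+1}\otimes\Q$. Set
$$
R(D) := \sum_{v} x_{i_v}\otimes \mathrm{br}(D, v),
$$
summed over leaves $v$ of $D$. Well-definedness modulo AS and IHX is immediate from antisymmetry and Jacobi in the free Lie algebra. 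The image lies in $\Ker([\bullet,\bullet]_{k,2k}\otimes\Q)$ because $\sum_v [x_{i_v},\mathrm{br}(D,v)]=0$ holds for any unrooted trivalent tree---a generalized Jacobi identity which I would verify by induction on $h$ using the IHX relation at an edge incident to a leaf.

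\textbf{Isomorphism over $\Q$.} The source has rank $\sum_{j=k}^{2k-1}(qN_j-N_{j+1})$ (stated in \S\ref{review}), which equals the kernel dimension since $F/F_2\otimes F_j/F_{j+1}\to F_{j+1}/F_{j+2}$ is surjective over $\Q$ (the rational free Lie algebra being generated in degree one). Injectivity then follows from a canonical section, e.g.\ rooting each tree at its smallest-indexed leaf.

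\textbf{Main identity.} I would prove \eqref{bbc2} by induction on the vanishing index $k'\geq k$, starting from $T$ satisfying $\mathfrak{A}_{k+1}$. The base identification---that the degree-$k'$ part of $R(Z^t(T)-1)$ corresponds to the $F_{k'}/F_{k'+1}$-component of the $\lambda_j$---is the leading-term theorem of \cite{HM}. Given this, use Corollary \ref{cor24} to produce a string link $T_0$ whose degree-$k'$ longitudes realize those of $T$; then $T' := T\cdot T_0^{-1}$ satisfies $\mathfrak{A}_{k'+2}$. Multiplicativity of $Z$ (property (II)) together with Lemma \ref{lem122}(1) gives additivity of the map $T\mapsto Z^t_{<2k}(T)-1$ under this composition, and the analogous additivity modulo $F_{2k}$ of $\sum_j x_j\otimes \lambda_j$ follows from a direct computation of how longitudes compose in the fundamental group of the link complement, using in particular that $F_k/F_{2k}$ is abelian. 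Applying the base case to $T_0$ and the inductive hypothesis to $T'$, then summing, yields the identity for $T$ after at most $k$ iterations.

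\textbf{Principal obstacle.} The delicate step is controlling the corrections to naive additivity modulo $F_{2k}$ on both sides. Longitudes do not compose strictly additively modulo $F_{2k}$: stacking $T_1$ above $T_2$ conjugates $\lambda_j(T_1)$ by meridian-words of $T_2$, producing commutator corrections of Lie-degree $\geq k'+1$. These must be matched against the non-commutative corrections from multiplication in the completed Hopf algebra $\A(\uparrow^q)$, which are controlled by the primitivity of $Z^t_{<2k}(T)-1$ via Lemma \ref{lem122}(2) together with the constraint \eqref{le}. Verifying that these two families of corrections correspond under $R$ at each iteration is the technical heart of the argument.
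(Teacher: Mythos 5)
Your strategy is genuinely different from the paper's: you build $R$ combinatorially as the sum-over-leaves map on labeled trees and then prove \eqref{bbc2} by induction on the vanishing index, peeling off the longitudes one degree at a time via realization and the leading-term theorem of \cite{HM}. The paper instead defines $R$ through the group-like expansion $\theta^Z$ of Example \ref{lie22} and the maps $\Upsilon^{(j)}$ modelled on the doubling formula \eqref{qqqq2}, which makes \eqref{bbc2} essentially tautological for pure braids; the general case is then handled in a single step by choosing a pure braid $\sigma$ with the same $\bar{\mu}$-invariants in $F_k/F_{2k}$, so that $Z^t(T\sigma^{-1})=1+O(2k)$ by \cite[Theorem 6.1]{HM} and $Z^t_{<2k}(T)=Z^t_{<2k}(\sigma)$ by Lemma \ref{lem122}(1). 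Your route is viable in principle, but two of its steps are genuine gaps as written.

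First, you explicitly leave open the ``technical heart'': matching the corrections to additivity of longitudes against those of $Z^t$. This must be closed, and it can be, but not by the vague appeal you make. On the diagram side, Lemma \ref{lem122}(1) already gives strict additivity of $Z^t_{<2k}(\cdot)-1$ once both factors are $1+O(k)$. On the group side, when both factors have longitudes in $F_k$, the conjugation corrections you worry about land in $F_{2k}$: the Artin-type map induced by a degree-$\geq k$ string link is the identity modulo $F_{k+1}$ on generators, hence moves an element of $F_k$ only by an element of $F_{2k}$, so longitudes are strictly additive in the abelian group $F_k/F_{2k}$. Without this argument (or the one-step reduction to pure braids used in the paper) your induction does not close. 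Second, your isomorphism claim is not established: ``rooting each tree at its smallest-indexed leaf'' is not a section of the sum-over-all-leaves map $R$ and does not prove injectivity. Over $\Q$ a correct left inverse is $\tfrac{1}{j+1}$ times the map sending $x_i\otimes\alpha$ to the tree of $\alpha$ with a new leaf labeled $i$ attached at its root (the composite with $R$ is multiplication by the number of leaves); alternatively, prove surjectivity onto the kernel by realizing kernel elements by string links and invoking the \cite{HM} leading-term theorem, then conclude by the dimension count, which is what the paper does. A smaller point: your formula for $R$ is defined on leg-labeled trees, and ``forgetting the positions of the legs on the strands'' is not compatible with the STU relation, so you must route through the PBW isomorphism $\chi_q$ of \eqref{bbcc} before reading off leaf labels.
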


Since Theorem \ref{thm24} implies that the left-hand side is equivalent to the Orr invariant, we have the following equivalence:
\begin{cor}\label{thm2244}
For any string link $T$ satisfying $\mathfrak{A }_{k+1} $ of the closure $\overline{T}$, the Orr invariant $ \theta_k( L,\tau)$ is equivalent to the tree reduction of the Kontsevich invariant of degree $< 2k $.
\end{cor}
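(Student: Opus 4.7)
The plan is to deduce Corollary \ref{thm2244} as an immediate composition of the two principal results that precede it. Theorem \ref{thm224} supplies a linear isomorphism
$$R : \bigoplus_{j=k}^{2k-1} \mathcal{P}_j^t(q) \stackrel{\sim}{\longrightarrow} \Ker\bigl([\bullet,\bullet]_{k,2k}\bigr) \otimes \Q$$
sending the tree reduction $Z^t_{<2k}(T) - 1$ to $\sum_{j=1}^{q} x_j \otimes \lambda_j$ for every string link $T$ whose closure satisfies $\mathfrak{A}_{k+1}$. Theorem \ref{thm24}(II) supplies a bijection
$$\overline{\Phi} : \Ker\bigl([\bullet,\bullet]_{k,2k}\bigr) \otimes \Q \stackrel{\sim}{\longrightarrow} \pi_3(K_k) \otimes \Q$$
satisfying $\overline{\Phi}\bigl(\sum_{j=1}^{q} x_j \otimes \lambda_j\bigr) = \theta_k(\overline{T},\tau)$ on the same class of string links.

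I then consider the composite bijection $\overline{\Phi}\circ R$ from $\bigoplus_{j=k}^{2k-1} \mathcal{P}_j^t(q)$ to $\pi_3(K_k)\otimes\Q$. Applying $\overline{\Phi}$ to the identity \eqref{bbc2} from Theorem \ref{thm224} gives
$$\overline{\Phi}\circ R\bigl(Z^t_{<2k}(T) - 1\bigr) = \theta_k(\overline{T},\tau)$$
for every string link $T$ satisfying the hypothesis. Because $\overline{\Phi}\circ R$ is a bijection independent of $T$, this establishes the asserted equivalence: the tree-reduced Kontsevich invariant of degree $<2k$ and the Orr invariant $\theta_k$ determine each other through an explicit natural correspondence.

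There is essentially no obstacle in this last deduction; the entire content is packaged in the two inputs. The substantive work lies upstream: Theorem \ref{thm24} must realize the Orr homotopy class on the mapping cone $K_k$ in terms of longitudinal data lying in $\Ker([\bullet,\bullet]_{k,2k})$, leveraging the splitting of Theorem \ref{thdr}(I) and the pushforward description \eqref{hom553}; and Theorem \ref{thm224} must identify the group-like tree part of $Z(T)$ in the primitive subspace $\mathcal{P}^t_{<2k}(q)$ with $\sum_{j} x_j \otimes \lambda_j$ modulo $F_{2k}$, presumably via a Magnus-type expansion that translates trivalent tree chord diagrams of degree $<2k$ into the iterated commutators generating $F_k/F_{2k}$. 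Once those are in place, the corollary is a one-line composition.
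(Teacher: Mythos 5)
Your proposal is correct and matches the paper's own deduction: the corollary is obtained exactly by composing the identity \eqref{bbc2} of Theorem \ref{thm224} with the bijection $\overline{\Phi}$ of Theorem \ref{thm24}(II), so that $\overline{\Phi}\circ R$ carries $Z^t_{<2k}(T)-1$ to $\theta_k(\overline{T},\tau)$. All substantive work indeed lives in the two cited theorems, just as you note.
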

\begin{exa}\label{kk}
As an example, we give a computation of the Boromean rings $6_2^3 $.
Let $PB(q+1) $ denote the pure braid group on $q+1$. Let $\sigma_i$ be the geometric braid formed by crossing the $i$-th string over the $(i+1)$-th one.
Consider the string link $T$ presented by $ \sigma_1^{-1} \sigma_2 \sigma_1^{-1} \sigma_2 \sigma_1^{-1} \sigma_2 $. Then $\overline{T} = 6_2^3 $.
We can easily verify $\mathfrak{A }_{3} $ and the expressions of the longitudes as
$$ \lambda_{i}= x_{i+2} x_i x_{i+1} x_i^{-1} x_{i+2}^{-1} x_i x_{i+1}^{-1} x_i^{-1}= [x_{i+2}, x_i x_{i+1} x_i^{-1}]. $$
where $ i \in \Z/3$. Thus, the $\mu$-invariant forms $ \sum_{i=1}^3 x_i \otimes [x_{i+2}, x_i x_{i+1} x_i^{-1}]$ modulo $F_4$. This value can be computed as something quantitative by using Magnus expansion $\mathcal{M}_4$ (see \S \ref{Sp34} for the definition of $\mathcal{M}_m$).
\end{exa}

\section{Proof of Theorem \ref{thm224}}
\label{Sp34}

As preparation, let us review the notion of group-like expansions and look at Example \ref{lie22}. Let $ \mathcal{I}_m \subset \Q \langle X_1, \dots, X_q \rangle $ be the both-sided ideal generated by polynomials of degree $\geq m. $ Consider the augmentation $\varepsilon : \Q \langle X_1, \dots, X_q \rangle\ra \Q $ with $\varepsilon (X_i)=1$, and a coproduct defined by $\Delta(X_i) =X_i \otimes 1 + X_i \otimes 1$. Then, the involution $S : \Q \langle X_1, \dots, X_q \rangle \ra \Q \langle X_1, \dots, X_q \rangle $ which sends $X_i$ to $-X_i$ makes it into a Hopf algebra. A {\it Magnus expansion} (modulo $O(m)$) is a group homomorphism $\theta : F \ra (\Q \langle X_1, \dots, X_q \rangle / \mathcal{I}_m )^{ \times }$, that satisfies $\theta (y )=1+ [ y] + O(2)$ for any $y \in Y$. Furthermore, a {\it group-like expansion} is a Magnus expansion $\theta$ satisfying $\Delta (\theta(y))= \theta(y) \otimes \theta(y)$ and $ \varepsilon (\theta(y)) =1$ for any $y \in Y$. For example, the homomorphism $\mathcal{M}_m : F \ra \Q \langle X_1, \dots, X_q \rangle/ \mathcal{I}_m $ which sends $ x_i $ to $1+X_i$ is not a group-like expansion, but a Magnus expansion.
\begin{rem}\label{ska}
We should mention some of the properties of these expansions (see \cite[Theorem 1.3]{Ka})

(1) Given another Magnus expansion $\theta'$, there is a ring automorphism $S_{\theta'}$ on $\Q \langle X_1, \dots, X_q \rangle / \mathcal{I}_m $ such that $ \theta_{\rm str} = S_{\theta'} \circ \theta'$.

(2) We have $ \theta (F_m)=0$, and $\theta $ induces an injection $\theta : F/F_m \ra \Q \langle X_1, \dots, X_q \rangle / \mathcal{I}_m . $ In fact, since the above $\mathcal{M}_m $ is injective, the injectivity inherits every $\theta$, by (1).
Furthermore, since the restriction on $F_j/F_{j+1} $ of $\mathcal{M}_{j+1} $ is additive by definition, that of $\theta $ is also an additive map.

(3) There is a Lie algebra isomorphism from $\mathcal{L}/ \mathcal{L}_{\geq m} $ to the subspace,
\begin{equation}\label{lie}
\mathcal{P} (F/F_m ) := \{ a \in \Q \langle X_1, \dots, X_q \rangle / \mathcal{I}_m \ | \ \Delta(a)= a \otimes 1 +1 \otimes a \ \}
\end{equation}
with the Lie bracket $[a,b]=ab - ba.$ The restricted image $\mathcal{M}_m (F_m /F_{m+1}) $ is contained in this $\mathcal{P} (F/F_m ) $.
\end{rem}

Next, the isomorphism \eqref{qq} below is related to another example of $\theta $ arising from the Kontsevich invariant. For an index pair $(i, j) \in \{1,\dots , q+1 \}^2 $, let $t_{i,j } \in \A_{1}^t (\uparrow^{q+1}) $ be the Jacobi diagram with only one edge connecting the $i$-th strand to the $j$-th one. Namely,

\begin{figure}[h]
\begin{center}
\begin{picture}(50,18)
\put(-63,-8){\Large $t_{ij}=$}
\put(-16,-7){\pc{tij}{0.144}}
\put(-15,-30){\large \ $ 1 \ \cdots i $ \ \ \ \ \ $j \cdots q+1 $}
\end{picture}
\end{center}
\end{figure}

\

\noindent
We denote $\A^{\rightleftharpoons} (\uparrow^{q},* ) $ by the subalgebra of $\A (\uparrow^{q+1}) $ generated by $t_{1, q+1}, t_{2, q+1} \dots, t_{q, q+1}$. Let $FI$ denote the framing independence relation in $\A^{\rightleftharpoons} (\uparrow^{q},* ) $, where any Jacobi diagram with an isolated chord on the same interval is equal to 0. Then, as shown \cite[(6.2)]{Mas2}, we can verify that the map,
\begin{equation}\label{qq}
\Q \langle X_1, \dots, X_q \rangle/ \mathcal{I}_m \lra \A^{\rightleftharpoons} (\uparrow^{q} ,* )/ (\A_{\geq m }^{\rightleftharpoons} (\uparrow^{q} ,* ) + FI),
\end{equation}
which sends $X_i$ to $t_{i, q+1}$ is a Hopf algebra isomorphism.

\begin{exa}
[{\cite[Proposition 6.2]{Mas2}}]\label{lie22}
We use notation on the pure braid group $PB(q+1) $ in Example \ref{kk}.
Let $\sigma_{j, q+1}\in PB(q +1 ) $ be $ \sigma_q \sigma_{q-1} \cdots \sigma_{j+1} \sigma_j^2 \sigma_{j+1}^{-1 } \cdots \sigma_j^{-1}$; see Figure \ref{tw}.
As is well known, there is a semi-direct product decomposition $ PB(q+1) \cong F(q) \ltimes PB(q)$, where $ F(q) $ is the free group generated by $\sigma_{1,q+1}, \dots, \sigma_{q,q+1}$, and $ PB(q) $ is embedded into $PB(q+1 )$ via $\beta \mapsto \beta \times \uparrow. $
Thus, any element $g$ of the free group $F (q) $ can be regarded as a pure-braid $ PB(q+1) \subset SL (q+1)$. Therefore, we can define $Z^t(g) \in \A^t (\uparrow^{q+1})$. As is shown \cite{Mas2}, $Z^t(g)$ lies in the subalgebra $\A^{\rightleftharpoons} (\uparrow^{q} ,* )$, and the composite,
\begin{equation}\label{qqqq}\theta^Z: F \stackrel{Z }{\lra }
\A^{\rightleftharpoons} (\uparrow^{q} ,* )/ (\A_{\geq m }^{\rightleftharpoons} (\uparrow^{q+1}) + FI) \stackrel{\eqref{qq}^{-1}}{\lra } \Q \langle X_1, \dots, X_q \rangle/ \mathcal{I}_m,
\end{equation}
turns out to be a group-like expansion (Moreover, it was shown to be a ``special expansion").
\end{exa}
Before turning back to Theorem \ref{thm224}, we should mention \eqref{qqqq2} from \cite[Corollary 12.2]{HM}. For a pure braid $\sigma \in PB(q)$, the $\ell$-th longitude, $\lambda_{\ell} \in F (q)$, of $\sigma$ is equal to $ ( \sigma \times 1 )^{-1} \beta_{\ell} D_{\ell } (\sigma) \beta_{\ell} ^{-1}$ in $ PB(q+1)$. Here, $\beta_{\ell}$ is a braid of the form $ \sigma_1 \sigma_2 \cdots \sigma_{\ell-1} \in B_n .$ Thus, the $\theta^Z(\lambda_{\ell} )$ can be computed as
\begin{equation}\label{qqqq2} \theta^Z (\lambda_{\ell})= Z^t( \sigma \times 1 )^{-1} Z^t(\beta_{\ell}) \Delta_{\ell } (Z^t (\sigma)) Z^t(\beta_{\ell})^{-1} \in \A^{\rightleftharpoons} (\uparrow^{q} ,* ). \end{equation}

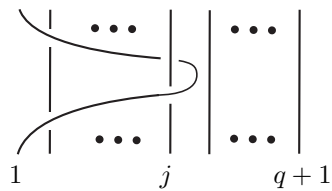
\begin{figure}[tpb]
\begin{center}
\begin{picture}(110,74)
\put(16,37){\pc{delta}{0.200}}
\put(18,3){$1$}
\put(75,3){$j $}
\put(118,3){$q+1$}
\end{picture}
\end{center}
\vskip -1.7pc
\caption{\label{tw} The pure braid $\sigma_{j,q+1}$ }
\end{figure}

\begin{proof}[Proof of Theorem \ref{thm224}]
Inspired by \eqref{qqqq2}, for $\ell \leq q $, we set up a homomorphism,
$$ \Upsilon_\ell^{(j)} : \mathcal{P}^{t}_j (\uparrow^{q} ) \lra \A^{\rightleftharpoons}_{\leq j} (\uparrow^{q} ,* ) /FI \cong \Q \langle X_1, \dots, X_q \rangle / \mathcal{I}_{j+1} ; \ a \longmapsto ( (a+1) \times 1 )^{-1} Z^t(\beta_{\ell}) \Delta_{\ell } (a +1) Z^t(\beta_{\ell})^{-1}. $$
Furthermore, we consider the linear homomorphism,
$$ \Upsilon^{(j)} : \mathcal{P}^{t}_j (\uparrow^{q} ) \lra F/F_2 \otimes_{\Z } \Q \langle X_1, \dots, X_q \rangle / \mathcal{I}_{j+1} ; \ \
a \longmapsto \sum_{\ell : \ 1 \leq \ell \leq q }x_i \otimes \Upsilon_\ell^{(j)} (a). $$
We will show the injectivity of $\Upsilon^{(j)} $ and that the image is equal to
\begin{equation}\label{qqqq5} \{ \sum_{1 \leq \ell \leq q} x_{\ell} \otimes S_{\theta^Z}(a_{\ell}) \in F/F_2 \otimes (\mathcal{I}_{j} / \mathcal{I}_{j+1})\ | \ \Delta(a_i) = a_i \otimes a_i, \ \sum_{1 \leq \ell \leq q} a_{\ell}X_{\ell} - X_{\ell} a_{\ell}=0 \ \}. \end{equation}
From Remark \ref{ska} (3), this subspace \eqref{qqqq5} can be identified with the kernel of $[\bullet, \bullet]_{j,j+1 } : F/F_2 \otimes \mathcal{L}_{j}/ \mathcal{L}_{j+1} \ra \mathcal{L}_{j+1}/ \mathcal{L}_{j+2 }$. Thus, the rank of \eqref{qqqq5} is $q N_j - N_{j+1}. $ As is known (see \cite{Le,HM}), if some $ (\alpha_1,\dots, \alpha_q ) \in F_j/F_{j+1} $ satisfies $[x_1, \alpha_1 ]\cdots [x_q, \alpha_q] =1 \in F/F_{j+1}$, then there exists a string link $T_{\alpha}$ with $ \bar{\mu}$-invariants $( \lambda_1, \dots, \lambda_q)\in (F_j)^q$ such that $ \lambda_j \equiv \alpha_j $ mod $F_{j+1}$. Therefore, by \eqref{qqqq2}, the image of $\Upsilon^{(j)} $ is generated by $\Upsilon^{(j)} (Z^t_{\leq j} ( T_{\alpha})-1 )= \sum_{i=1}^{q } x_\ell \otimes \theta^Z (\lambda_\ell ) $, where $\alpha $ runs over a basis of $\Ker[\bullet, \bullet]_{j,j +1 }. $ Since the rank of $ \mathcal{P}^{t}_j ( \uparrow^q )$ is $q N_j - N_{j+1}$, $\Upsilon^{(j)} $ must be injective, and the image is \eqref{qqqq5}, as required.

To complete the proof, we will construct the isomorphism $R$ and show the equality \eqref{bbc2}.
Consider
$$\mathrm{Id}_{\Z^q} \otimes \theta^Z :
\Z^q \otimes_{\Z} F_k /F_{2k } \lra F/F_2 \otimes_{\Z } \Q \langle X_1, \dots, X_q \rangle / \mathcal{I}_{2k},$$
which is an injective homomorphism, by Remark \ref{ska} (2). The image of $\Ker[\bullet, \bullet]_{j,j+1} $ is contained in \eqref{qqqq5}. We define the isomorphism $R : \Ker([\bullet,\bullet ]_{k,2k-1 } \otimes \Q ) \ra \oplus_{j=k}^{2k-1} \mathcal{P}^t_{j } ( q ) $ as a linear extension of $ \mathrm{Id}_{\Z^q} \otimes (\mathrm{id}_{\Z^q} \otimes \theta^Z ) \circ\oplus_{j=k}^{2k-1}( \Upsilon^{(j)} )^{-1} )$. 

We now prove the equality \eqref{bbc2} where the string link $T$ is a pure braid. Noting that $Z_{<2k } (T)-1$ is primitive by Lemma \ref{lem122}, we compute $\Upsilon^{(\ell )}( Z^t_{< 2k } (T)-1) $ as
\[ \sum_{\ell=1}^{q} \bigl( x_\ell \otimes \sum_{j=k}^{2k-1} (\Upsilon^{(j)}_{\ell})( Z^t_{< 2k } (T)-1)\bigr) = \sum_{\ell=1}^{q} x_\ell \otimes ( \theta^Z(\lambda_\ell ) )= \sum_{\ell=1}^{q} \theta^Z( x_\ell) \otimes ( \theta^Z(\lambda_\ell ) )
\]
which is immediately deduced from from \eqref{qqqq2}.
By the injectivity of $\theta^Z$, this equality implies the desired \eqref{bbc2}.

Finally, we will prove \eqref{bbc2} for any string link $T$ satisfying $\mathfrak{A }_{k+1} $ of the closure $\overline{T}$.
As implicitly shown in the proofs of \cite[Propositions 10.6 and Theorem 6.1]{HM},
there is a pure braid $\sigma$ such that the Milnor invariants of $T$ and $\sigma$ in $F_k/F_{2k}$ are equal. Thus, the invariant of $ T \sigma^{-1}$ is zero. Thus, \cite[Theorem 6.1]{HM} immediately implies $ Z^t( T \sigma^{-1}) = 1 +O(2k)$,
which leading to $ Z^t( T ) =Z^t( \sigma)$ modulo $ O(2k)$ by Lemma \ref{lem122} (1).
Hence, $ R (Z^t( T ) -1 ) =R ( Z^t( \sigma)-1) $ is equal to $\sum_{i=\ell}^q x_i \otimes \lambda_\ell $ by the above paragraph, as desired. It completes the proof.
\end{proof}

\section{Proof of Theorem \ref{thm24}}
\label{Sher33}

\subsection{Review of the infinitesimal Morita-Milnor homomorphism, and tree reduction}
\label{Sher}
As a preliminary to prove Theorem \ref{thm24}, we review some of the results in \cite{Mas,IO,K}.

We will start by briefly reviewing the Lie algebra homology of $F/F_k$. Let $H$ be the $\Q$-vector space of rank $q$ with basis $X_1, \dots, X_q$, i.e., $H= \mathrm{Span}_{\Q} \langle X_1, \dots, X_q \rangle. $ Let $\mathfrak{L}$ be the free Lie algebra generated by $H$. This $\mathfrak{L}_{\geq k}$ is the subspace generated by the commutator of length $\geq k$. We have the quotient Lie algebra $\mathfrak{L} / \mathfrak{L}_{\geq k} $. Then, {\it the Koszul complex} of $\mathfrak{L} / \mathfrak{L}_{\geq k} $ is the exterior tensor algebra $\Lambda^{*} (\mathfrak{L} / \mathfrak{L}_{\geq k} ) $ with the boundary map $\partial_n: \Lambda^{n} (\mathfrak{L} / \mathfrak{L}_{\geq k}) \ra \Lambda^{n-1} (\mathfrak{L} / \mathfrak{L}_{\geq k}) $ given by
$$ \partial_n(h_1 \wedge \cdots \wedge h_n ) = \sum_{i <j} (-1)^{i+j} [h_i, h_j]\wedge h_1
\wedge \cdots \wedge \check{h}_{i} \wedge \cdots \wedge \check{h}_{j} \wedge \cdots \wedge h_n. $$
Later, we will use the known isomorphism $H_n( \Lambda^{*} (\mathfrak{L} / \mathfrak{L}_{ \geq k} ) ) \cong H_n(F/F_{k}; \Q )$, which is called Pickel's isomorphism; see, e.g., \cite{IO, Mas,SW}.

Next, let us review Jacobi diagrams. A {\it Jacobi diagram} is a uni-trivalent graph whose univalent vertices are labeled by one of $\{ 1,2,\dots, q\}$, where each trivalent vertex is oriented. Consider the graded $\Q$-vector space generated by Jacobi diagrams, where the degree of such a diagram is half the number of vertices. Let $\mathcal{J}(q)$ be the quotient space subject to the AS and IHX relations, and let $\mathcal{J}^t (q) \subset \mathcal{J}(q)$ be the subspace generated by Jacobi diagrams which are simply connected. As a diagrammatic analogy of the Poincar\'{e}-Birkhoff-Witt theorem, we can construct a graded vector isomorphism,
\begin{equation}\label{bbcc}\chi_q : \mathcal{P}^t_n (\uparrow^q) \cong \mathcal{J}^t_n (q). \end{equation}
See, e.g., \cite[\S 5.7]{CDM} for details.
\begin{rem}\label{ll}
Theorem \ref{thm224} implies a generalization of the main theorem 6.1 of \cite{HM}.
In fact, recalling from Theorem \ref{thdr} (II) the $k$-th summand of $\theta_k (L,\tau)$ is the Milnor invariant of length $k$,
the equivalence between the Milnor invariant and $\chi_q(Z^t_{k}(T)-1 )$ coincides with the theorem 6.1 of \cite{HM} exactly.
\end{rem}

Furthermore, we consider the subspace $ \mathcal{J}_n (q, 0)$ of $ \mathcal{J}_n (q+1) $ generated by tree diagrams in which the label $q+1$ occurs exactly once. Given such a diagram $J \in \mathcal{J}_n (q, 0) $, all of its univalent vertices apart from one labeled by $r$ defines $\mathrm{comm}(J)$ in $F_n /F_{n+1}$ in a canonical way. For example,
\begin{figure}[h]
\begin{center}
\begin{picture}(50,74)

\put(-38,63){\large \ \ \ \ \ $x_1 \ \ \ \!\! x_2 \ \ \!\! x_3 \ x_4 $}

\put(8,19){\large \ \ \ $q+1 $}

\put(-63,38){\large comm $\Bigl(\ \ \ \ \ \ \ \ \ \ \ \ \ \ \ \ \ \ \Bigr) = [ x_1,[[x_2,x_3], x_4 ]] \in F_4 . $}
\put(-16,37){\pc{tree}{0.2114}}
\end{picture}
\end{center}
\end{figure}

\vskip -2.7pc

\noindent
Then, we can easily see that the following correspondence is an isomorphism,
$$ \mathrm{comm}: \mathcal{J}_n ( q, 0) \lra F_n /F_{n+1}\otimes \Q . $$

Next, we will review the infinitesimal Morita-Milnor homomorphism, $M_k$, defined in \cite[\S 5.1]{K} (cf. the infinitesimal Morita map in \cite{Mas2}). For this, a string link $T \in SL (q)$ is {\it of degree $k$} if the closure $\overline{T}$ satisfies Assumption $\mathfrak{A}_{k+1} $. Let $SL (q)_k $ be the subset consisting of string links of degree $k$. Then, we have a filtration,
$$SL (q)=SL (q)_0 \supset SL (q)_1 \supset \cdots \supset SL (q)_k \supset \cdots. $$
Further, we recall the $\bar{\mu}$-invariants $\lambda_{\ell} \in F_k/F_{2k -1 }$ and expand them as $\lambda_{\ell} = \lambda_{\ell}^{(k)} + \cdots + \lambda_{\ell}^{(2k -2 )}$ with $\lambda_{\ell}^{(j)} \in F_j/F_{j+1} $ according to $F_k/F_{2k-1} \cong \oplus_{j=k}^{2k-2} F_j/F_{j+1} $. Recall that, if $ L \in SL (q)_k $ is of degree $k$, the $\bar{\mu}$-invariants $ \lambda_\ell $ are contained in $F_k $. Then, identifying $F_{j}/F_{j+1} \otimes \Q$ with $ \mathfrak{L}_j/\mathfrak{L}_{j+1}$ as a $\Q$-vector space, let us consider a 2-form,
\begin{equation}\label{bbc9}
\sigma_L := \sum_{j=k}^{ 2k-2 } \sum_{\ell =1}^{q} X_\ell \wedge \lambda_\ell^{(j)} \in \Lambda^2 (\mathfrak{L}/\mathfrak{L}_{\geq 2k -1 } ) .
\end{equation}
As is shown in \S 5.1 of \cite{K}, there exists $t_L \in \Lambda^3 (\mathfrak{L}/\mathfrak{L}_{\geq 2k -1} )$ satisfying $\partial_3(t_L) =\sigma_L $.
Since $ \lambda_\ell \in F_k$ by Assumption $\mathfrak{A}_{k+1}$, the 2-form $ \sigma_L $ reduced in $\Lambda^2 (\mathfrak{L}/\mathfrak{L}_{ \geq k} ) $ is zero. Hence, $t_L$ is a 3-cycle. It has been shown \cite[Lemma 5.1.2]{K} that the homology 3-class $[\{ t_L\}] \in H_3 (\mathfrak{L}/\mathfrak{L}_{ \geq k} )$ is independent of the choice of $t_L$. To summarize, we have a map,
$$M_{k} : SL(q)_k \lra H_3 (\mathfrak{L}/\mathfrak{L}_{ \geq k} ); \ \ L \longmapsto [\{ t_L\}] .$$
Kodani \cite[\S 5]{K} showed that $M_{k}$ is a monoid homomorphism and its kernel is $SL(q)_{2k-1}$.

Now let us review the tree description of the third homology \cite{Mas}. Fix a tree diagram $J \in \mathcal{J}^t_j (q)$. For each trivalent vertex $r$ of $J$, $J$ is the union of the three tree diagrams rooted as $r$. We denote the three tree diagrams by $J_r^{(1)}$, $J_r^{(2)}$, and $J_r^{(3)}$, by clockwise rotation. Here, the numbering 1,2,3 is according to the cyclic ordering of $r$. Furthermore, if we label a univalent $r$ by $q+1$, each $J_r^{(j)}$ can be regarded as an element in $\mathcal{J}^t(q,0)$. Then, the {\it fission map} $\phi: \mathcal{J}^t_j (q) \ra \Lambda^{3} (\mathfrak{L} / \mathfrak{L}_{ \geq k} ) $ is defined by
$$ \phi(J)= \sum_{r: \mathrm{trivalent \ vertex \ of \ }J } \mathrm{comm}(J_r^{(3)} )\wedge \mathrm{comm}(J_r^{(2)} )\wedge \mathrm{comm}( J_r^{(1)}). $$
\begin{thm}
[\cite{Mas}]\label{thma1} If $ k \leq j \leq 2k - 2 $, this $\phi(J)$ is a 3-cycle. Moreover, the fission map gives rise to a linear isomorphism,
$$ \Phi : \bigoplus_{j=k }^{2k-2} \mathcal{J}^t_j ( q ) \stackrel{\sim}{\lra} H_3( \mathfrak{L} / \mathfrak{L}_{ \geq k} ) . $$
\end{thm}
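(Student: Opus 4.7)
The plan is to prove the two assertions of the theorem in turn, following Massuyeau's approach \cite{Mas}. The first task is to check that $\phi(J)$ is a cycle in the Koszul complex of $\mathfrak{L}/\mathfrak{L}_{\geq k}$. Given a trivalent vertex $r$ of $J$, splitting $J$ at $r$ yields sub-trees $J_r^{(1)}, J_r^{(2)}, J_r^{(3)}$ whose degrees $n_1, n_2, n_3$ satisfy $n_1+n_2+n_3 = j+1$. Applying the Koszul differential $\partial_3$ to the corresponding summand produces three terms of the form $\pm [\mathrm{comm}(J_r^{(a)}), \mathrm{comm}(J_r^{(b)})] \wedge \mathrm{comm}(J_r^{(c)})$. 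The key observation is that each bracket $[\mathrm{comm}(J_r^{(a)}), \mathrm{comm}(J_r^{(b)})]$ equals (up to a sign coming from the cyclic orientation) $\mathrm{comm}$ of the rooted tree obtained by grafting $J_r^{(a)}$ and $J_r^{(b)}$ at their $(q+1)$-marked vertices, because $\mathrm{comm}$ is itself defined as an iterated commutator. I would then reorganize the resulting double sum (over trivalent $r$ and over the three terms of $\partial_3$) into contributions indexed by the internal edges of $J$. At each internal edge, three rooted trees appear that fit precisely into an IHX triple in $\mathcal{J}^t(q,0)$, and therefore cancel after applying $\mathrm{comm}$. The upper bound $j \leq 2k-2$ enters exactly here: any leftover bracket-term of total degree $\geq k$ vanishes in $\mathfrak{L}/\mathfrak{L}_{\geq k}$, and the bound guarantees that the residual surviving terms combine into complete IHX triples.

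For the isomorphism $\Phi$, I would combine a dimension count with an injectivity argument. By the diagrammatic PBW identification \eqref{bbcc}, $\mathcal{J}^t_j(q) \cong \mathcal{P}^t_j(\uparrow^q)$ has rank $qN_j - N_{j+1}$, so the source has total dimension $\sum_{h=k}^{2k-2}(qN_h - N_{h+1})$. Pickel's isomorphism $H_3(\mathfrak{L}/\mathfrak{L}_{\geq k}) \cong H_3(F/F_k;\Q)$, combined with Theorem \ref{thdr}(I), shows the target has exactly the same rank. Hence it suffices to prove that $\Phi$ is injective.

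For injectivity I would exploit the weight grading on $\mathfrak{L}$, which descends to a grading on $H_3(\mathfrak{L}/\mathfrak{L}_{\geq k})$. Since $\phi$ sends $\mathcal{J}^t_j(q)$ into the weight-$(j+1)$ component, it suffices to check injectivity degree by degree. Projecting $\phi$ onto the summand in which one prescribed univalent vertex is singled out as a ``root'' converts $\phi$ into the classical iterated-bracketing map from rooted tree diagrams modulo AS and IHX into the free Lie algebra, which is injective (this is precisely the rooted-tree-diagrammatic presentation of $\mathfrak{L}$). This forces any element of $\ker\Phi$ to vanish. The main obstacle is the 3-cycle verification: the IHX cancellation requires careful bookkeeping of signs coming from the Koszul differential combined with the cyclic orientations at trivalent vertices, and one must confirm that the degree window $k \leq j \leq 2k-2$ absorbs exactly those residual bracket-terms that would otherwise escape the IHX triples.
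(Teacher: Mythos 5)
The paper does not actually prove this statement; it is imported from Massuyeau \cite{Mas}, so your proposal can only be judged on its own terms. The first half is right in outline but misassigns the roles of the two degree bounds. Computing $\partial_3\phi(J)$ in the \emph{full} free Lie algebra, the terms indexed by a trivalent vertex $r$ together with an \emph{internal} edge at $r$ cancel in pairs --- one term from each endpoint of that edge, via the Jacobi identity, so two terms per internal edge rather than an ``IHX triple of three rooted trees'' --- and what survives is exactly $\sum_{v}X_{\mathrm{col}(v)}\wedge\mathrm{comm}(J_v)$, the sum over univalent vertices; this is \cite[Lemma 1.2]{Mas}, quoted in the proof of Lemma \ref{oo2}. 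These residual terms then die in $\Lambda^2(\mathfrak{L}/\mathfrak{L}_{\geq k})$ because $\mathrm{comm}(J_v)\in\mathfrak{L}_j$ and $j\geq k$: it is the \emph{lower} bound $k\leq j$ that makes $\phi(J)$ a cycle, not the upper bound $j\leq 2k-2$, which plays no role in the cycle condition and is only needed for $\Phi$ to be an isomorphism. Your sentence attributing the vanishing of the leftover terms to $j\leq 2k-2$ would not survive scrutiny.

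The serious gap is in the second half. The dimension count via \eqref{bbcc}, Pickel's isomorphism and Theorem \ref{thdr}(I) is fine, so bijectivity does reduce to injectivity (or surjectivity). But your injectivity argument addresses, at best, injectivity of $\phi$ into the space of 3-\emph{cycles}: an element of $\ker\Phi$ is one whose image is a \emph{boundary} in $\Lambda^3(\mathfrak{L}/\mathfrak{L}_{\geq k})$, and nothing in your sketch rules out $\phi(J)$ being a nonzero boundary. Moreover, the ``projection onto the summand in which one prescribed univalent vertex is singled out as a root'' is not a well-defined operation on $\Lambda^3(\mathfrak{L}/\mathfrak{L}_{\geq k})$ --- the wedge factors of $\phi(J)$ are iterated brackets carrying no memory of a distinguished leaf --- so even the cycle-level injectivity is not established. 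This is precisely the hard content of Massuyeau's theorem; an honest proof must work at the level of homology (e.g.\ via the Koszul complexes of the central extensions of $\mathfrak{L}/\mathfrak{L}_{\geq k}$ together with the Igusa--Orr computation underlying Theorem \ref{thdr}(I)), not merely at the level of cycles.
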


Next, let us review the isomorphism \eqref{bbc} below. For a Jacobi diagram $J^{(j)} \in \mathcal{J}_j^t (q) $, we define $ \eta_{j} ( J^{(j)})$ to be the sum,
\begin{equation}\label{bbc6} \sum_{v: \mathrm{univalent \ vertex \ of \ }J^{(j)}} [ x_{\mathrm{col}(v)}] \otimes \mathrm{comm}(J_v^{(j)}) \in (F/F_2) \otimes_{\Q} (\mathcal{L}/\mathcal{L}_{> j} ), \end{equation}
where ${\mathrm{col}(v)} \in \{ 1, \cdots, q\} $ is the label on $v$, and $J_v^{(j)} \in \mathcal{J}_j^t (q,0) $ is the labelled tree obtained by replacing the label on $v $ with $q+1$. Taking the bracket $ [ -, -]:F/F_2 \otimes \mathcal{L}_{ j} \ra \mathcal{L}_{ j+1} $, which $(x,y)$ sends $xyx^{-1} y^{-1}$, we can easily verify that the sum \eqref{bbc6} lies in the kernel. Then, this $\eta_j$ defines a linear homomorphism,
\begin{equation}\label{bbc}\eta_{j} : \mathcal{J}_j^t (q) \lra \Ker([ -, -]: F/F_2 \otimes \mathcal{L}_{ j}/\mathcal{L}_{ j+1} \ra\mathcal{L}_{ j+1}/ \mathcal{L}_{ j+2} ), \end{equation}
which is known to be an isomorphism; see \cite{Le2}. We denote by $\eta $ the sum of the isomorphisms $\oplus_{j=k}^{2k -2 } \eta_{j} $ for short.

\subsection{Proof of Theorem \ref{thm224}}
\label{Sp2}
Then, the following lemma relates to the 3-class $[t_L]$ from tree diagrams:
\begin{lem}\label{oo2}
For $T \in SL(q)_{k}$, the composite $ \Phi \circ \eta^{-1} (\sum_{\ell=1}^q x_{\ell} \otimes \lambda_{\ell}) $ coincides with $M_k(T)=[t_L] $.
\end{lem}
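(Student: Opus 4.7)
The plan is to exhibit $\phi(J)$ as an explicit 3-chain playing the role of $t_L$ in the definition of $M_k(T)$, so that the two homology classes agree on the nose. Set $J := \eta^{-1}\bigl(\sum_\ell x_\ell \otimes \lambda_\ell\bigr) = \sum_{j=k}^{2k-2} J^{(j)}$ with $J^{(j)} \in \mathcal{J}^t_j(q)$; grading by degree gives $\eta_j(J^{(j)}) = \sum_\ell x_\ell \otimes \lambda_\ell^{(j)}$. By construction $\Phi \circ \eta^{-1}\bigl(\sum_\ell x_\ell \otimes \lambda_\ell\bigr) = [\phi(J)] \in H_3(\mathfrak{L}/\mathfrak{L}_{\geq k})$, whereas $M_k(T) = [t_L]$ for any lift $t_L \in \Lambda^3(\mathfrak{L}/\mathfrak{L}_{\geq 2k-1})$ with $\partial_3 t_L = \sigma_L$, independently of the choice of lift by \cite[Lemma 5.1.2]{K}. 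It therefore suffices to check that the formula defining $\phi(J)$, taken literally in $\Lambda^3(\mathfrak{L}/\mathfrak{L}_{\geq 2k-1})$, satisfies $\partial_3 \phi(J) = \sigma_L$.

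The heart of the argument is the identity, valid for each $J^{(j)} \in \mathcal{J}^t_j(q)$ with $k \leq j \leq 2k-2$,
$$\partial_3 \phi(J^{(j)}) \ = \ \sum_{v \text{ univ. of } J^{(j)}} X_{\mathrm{col}(v)} \wedge \mathrm{comm}(J_v^{(j)}) \quad \text{in } \Lambda^2(\mathfrak{L}/\mathfrak{L}_{\geq 2k-1}).$$
Granting this, summing over $j$ and transferring $\eta_j(J^{(j)}) = \sum_\ell x_\ell \otimes \lambda_\ell^{(j)}$ through the wedge yields $\partial_3 \phi(J) = \sum_{j,\ell} X_\ell \wedge \lambda_\ell^{(j)} = \sigma_L$, as required. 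Consistency with the cycle claim of Theorem \ref{thma1} is automatic, since each $\mathrm{comm}(J_v^{(j)}) \in \mathfrak{L}_j$ with $j \geq k$ vanishes after further projection to $\mathfrak{L}/\mathfrak{L}_{\geq k}$.

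The identity above is the main obstacle; it amounts to the chain-map property of the fission map from the tree Jacobi-diagram complex (with boundary obtained by turning a univalent vertex into a root) into the Koszul complex of $\mathfrak{L}/\mathfrak{L}_{\geq 2k-1}$. To prove it, I would expand
$$\partial_3 \phi(J^{(j)}) \ = \ \sum_r \partial_3\bigl(\mathrm{comm}(J_r^{(3)}) \wedge \mathrm{comm}(J_r^{(2)}) \wedge \mathrm{comm}(J_r^{(1)})\bigr),$$
and use that each bracket $[\mathrm{comm}(J_r^{(i)}), \mathrm{comm}(J_r^{(i')})]$ equals, by the recursive definition of $\mathrm{comm}$, the $\mathrm{comm}$ of the tree obtained from $J_r^{(i)}$ and $J_r^{(i')}$ by grafting them at their $q+1$-labelled roots and rooting the result at the third edge incident to $r$. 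For every internal edge $e = rr'$ joining two trivalent vertices of $J^{(j)}$, the contribution of the grafting at $r$ is cancelled by that at $r'$ (the opposite sign arises from the cyclic orientations together with the antisymmetry of $\wedge$), and the IHX/Jacobi relation arranges these cancellations locally at each vertex. The only surviving summands are those in which the grafting reaches an edge incident to a univalent vertex $v$, producing precisely $X_{\mathrm{col}(v)} \wedge \mathrm{comm}(J_v^{(j)})$. A cleaner alternative is induction on $j$: splitting $J^{(j)}$ at the trivalent vertex adjacent to a distinguished univalent vertex expresses $\phi(J^{(j)})$ in terms of $\phi$ of smaller trees, and the inductive step is closed by the Jacobi identity in $\mathfrak{L}$.
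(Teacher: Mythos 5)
Your argument is correct and follows the paper's proof essentially verbatim: both exhibit $\phi(J)$ as an admissible choice of $t_L$ by verifying the identity $\partial_3\phi(J^{(j)}) = \sum_{v} X_{\mathrm{col}(v)}\wedge\mathrm{comm}(J_v^{(j)})$ and matching it against the definition \eqref{bbc6} of $\eta_j$ to get $\partial_3\phi(J)=\sigma_L$. The only difference is that the paper simply cites \cite[Lemma 1.2]{Mas} for that identity, whereas you sketch its combinatorial proof (cancellation over internal edges, surviving terms at the leaves), where the cancellation is really just antisymmetry of the wedge and bracket rather than IHX.
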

\begin{proof}
Let $J^{(j)} \in \mathcal{J}^t_j(q)$ be $(\eta)^{-1}_j ( \sum_{i=1}^qx_i^{(j)}\otimes \lambda_i^{(j)})$. As is shown in \cite[Lemma 1.2]{Mas}, the composite $\partial_3(\phi ( J^{(j)})) $ is formed as
$$ \partial_3(\phi ( J^{(j)})) = \sum_{v: \mathrm{univalent \ vertex \ of \ }J^{(j)}} X_{\mathrm{col}(v)} \wedge \mathrm{comm}(J^{(j)}) \in \wedge^2 (\mathcal{L}/\mathcal{L}_{ \geq 2k} ). $$
Compared with \eqref{bbc6}, this $ \partial_3(\phi ( J^{(j)})) $ is equal to $\sigma_L \in \wedge^2 (\mathcal{L}/\mathcal{L}_{ \geq 2k} ) $ by definition. Thus, letting $t_L$ be $ \sum_{j=k}^{2k-2} \phi (J^{(j)}) $, we get a 3-chain $t_L$ satisfying $ \partial_3(t_L) =\sigma_L $, leading to the desired coincidence.
\end{proof}
We can prove Theorem \ref{thm24} by showing that the 3-class $[t_L]$ is equal to the Orr invariant of degree $k $. For this, we will have to prove the following lemma.
\begin{lem}\label{a2}
From the identification $H_3(\mathcal{L}/\mathcal{L}_{\geq k}) \cong H_3^{\rm gr}(F/F_k;\Q )$, the homomorphism $ M_k $ is equal to a map which sends the 3-class $t_L$ to $ \mathfrak{H} \circ \theta_k (\overline{T},\tau).$
\end{lem}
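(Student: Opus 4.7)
The plan is to compute the relative pushforward $(f_k)_{\ast}[S^3\setminus L,\partial(S^3\setminus L)]$ at the chain level in the Koszul model, and see that it represents the same homology class as $t_L$.

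First, I would invoke formula \eqref{hom553}, which identifies $\mathfrak{H}\circ\theta_k(\overline{T},\tau)$ with $P^{\rm gr}\circ(f_k)_{\ast}[S^3\setminus L,\partial]\in H_3(K(F/F_k,1),\sqcup^q K(\Z,1);\Q)$. Because $H_i(\sqcup^q K(\Z,1))=0$ for $i=2,3$, the long exact sequence of this pair collapses to give a canonical isomorphism $H_3(F/F_k;\Q)\xrightarrow{\sim} H_3(K(F/F_k,1),\sqcup^q K(\Z,1);\Q)$, and Pickel's isomorphism turns the left-hand side into $H_3(\Lambda^{\ast}(\mathfrak{L}/\mathfrak{L}_{\geq k}))$. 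Thus it suffices to exhibit a 3-chain $c$ in the Koszul complex representing $(f_k)_{\ast}[S^3\setminus L,\partial]$ and compare it with $t_L$.

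For the chain-level model, I would use a group-like Magnus expansion (as in \cite{Mas2,K}) to trivialise Pickel at the chain level. With such an expansion in hand, the pushforward of the fundamental 2-class of each boundary torus $T^2_{\ell}$, whose $\pi_1$ is generated by $(\mathfrak{m}_{\ell},\mathfrak{l}_{\ell})$, is represented by the 2-form $X_{\ell}\wedge\lambda_{\ell}$, viewed in $\Lambda^2(\mathfrak{L}/\mathfrak{L}_{\geq 2k-1})$; here we use $f_k(\mathfrak{m}_{\ell})=X_{\ell}$ and that the longitude lifts to $\lambda_{\ell}\in F_k/F_{2k-1}\cong\mathfrak{L}_k/\mathfrak{L}_{\geq 2k-1}$, even though $\lambda_{\ell}$ is zero already in $\mathfrak{L}/\mathfrak{L}_{\geq k}$. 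Summing over $\ell$, the boundary of the pushed-forward 3-chain equals $\sigma_L$. Since $\sigma_L$ vanishes after reduction to $\Lambda^2(\mathfrak{L}/\mathfrak{L}_{\geq k})$, this pushforward becomes a bona-fide 3-cycle there, whose class in $H_3(\mathfrak{L}/\mathfrak{L}_{\geq k})$ depends only on $\sigma_L$ modulo exact terms, by Kodani's independence result \cite[Lemma 5.1.2]{K}. As $t_L$ is by construction any 3-chain with $\partial_3 t_L=\sigma_L$, we conclude $[(f_k)_{\ast}c]=[t_L]$, which combined with step one yields the desired identification $M_k(T)=\mathfrak{H}\circ\theta_k(\overline{T},\tau)$.

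The main obstacle is the second step: establishing that $f_k$ sends the fundamental chain of each torus boundary $T^2_{\ell}$ to the wedge $X_{\ell}\wedge\lambda_{\ell}$ under Pickel's isomorphism. A naive naturality argument fails because $\lambda_{\ell}$ vanishes in $\mathfrak{L}/\mathfrak{L}_{\geq k}$, so one must pass to the finer quotient $\mathfrak{L}/\mathfrak{L}_{\geq 2k-1}$, where chain-level naturality of a group-like Magnus expansion provides exactly the required identification. Once this compatibility is verified, the rest of the argument is routine homological algebra using the collapse of the relative long exact sequence and the defining property $\partial_3 t_L=\sigma_L$.
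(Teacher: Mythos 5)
Your plan follows essentially the same route as the paper: both reduce via \eqref{hom553} to comparing the pushforward of the fundamental class with $t_L$ through a chain-level realization of Pickel's isomorphism (the paper uses Massuyeau's quasi-isomorphism $\kappa$ built from Suslin--Wodzicki), both identify the boundary contribution of each torus with $\mathfrak{m}_{\ell}\times\mathfrak{l}_{\ell}\mapsto X_{\ell}\wedge\lambda_{\ell}$, and both resolve the vanishing of $\lambda_{\ell}$ in $\mathfrak{L}/\mathfrak{L}_{\geq k}$ by first working in the finer quotient $F/F_{2k-1}$ before projecting. The paper carries out your ``second step'' concretely by working with the relative group complexes $C_*^{\rm gr}(F,K_1\cup\cdots\cup K_q)$ for the subgroups $K_{\ell}=\langle\mathfrak{m}_{\ell},\mathfrak{l}_{\ell}\rangle$ and the inclusion of the string-link complement, but the underlying argument is the one you describe.
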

Before proving this lemma, we now prove Theorem \ref{thm24}.
\begin{proof}
[Proof of Theorem \ref{thm24}] The proof of (I) immediately follows from Lemmas \ref{oo2} and \ref{a2}.

Now let us show (II). Fix a basis $\{ b_s \} $ of $H_3(F/F_k;\Z ) $, where $s$ ranges over $1 \leq s \leq \mathrm{rk}H_3(F/F_k;\Z ) $. Thanks to Theorem \ref{thdr} (III), we can choose a string link $L_{s} $ such that $ \mathfrak{H} (\theta_k ( \overline{L_s} ,\tau))= b_s$ and the component of $ \theta_k ( \overline{L_s} ,\tau) $ in the kernel $ \Ker \mathfrak{H}$ is zero. Then, from (1), the $\bar{\mu}$-invariant of $L_s $ in $F_k /F_{2k}$ is $ \eta \circ \Phi^{-1} (b_s ) + \sum_{j=1}^q x_j \otimes \lambda_{j,s}^{(2k)} $ for some $ \lambda_{j,s}^{(2k)} \in F_{2k-1}/F_{2k} \otimes \Q $. Recalling the isomorphism $\Ker \mathfrak{H} \otimes \Q \cong \Q^{q N_{2k-1} - N_{2k} } $, we fix the isomorphism $\iota : \Q^{q N_{2k-1} - N_{2k} } \ra \Ker[ \bullet, \bullet ]_{2k-1,2k}$ and define the bijection,
$$ \overline{\Psi}_k : \bigl( H_3(F/F_k;\Z ) \oplus \Ker \mathfrak{H}\bigr) \otimes \Q \lra \Ker[ \bullet, \bullet ]_{k,2k }, $$
by setting
$$\overline{\Psi}_k \bigl(( b_s , a) \otimes r \bigl) = r \bigl(\eta \circ \Phi^{-1 } (b_s )- (x_1 \otimes \lambda_{1,s}^{(2k)} + \cdots + x_q \otimes \lambda_{q,s}^{(2k)}) + \iota( a) \bigl), \ \ \ \ r \in \Q. $$

Let $ \overline{\Phi}$ be $\overline{\Psi}_k ^{-1}$. Then it suffices to show the desired equality $x_1\otimes \lambda_1+ \cdots + x_q\otimes \lambda_q = \overline{\Psi}_k \bigl( \theta_k(\overline{T} , \tau)\bigr) $ for any string link $T$ of degree $k$. Suppose $\mathfrak{H} \circ \theta_k (\overline{T} , \tau) = \sum_s \alpha_s b_s $ for some $\alpha_s \in \Z$. Then, the stucking $ T \sharp (\sharp_s (L_s)^{-\alpha_s} )$ is of degree $k+1.$ Let $\mathfrak{H}'$ be the Hurewicz map $\pi_3(K_{k+1}) \ra H_3(F/F_{k+1}) $. Using the additivity of $\theta_k $ and of the $\bar{\mu}$-invariants, we can compute $\overline{\Psi}_k \bigl( \theta_k(\overline{T} , \tau)\bigl) $ restricted to the kernel of $[\bullet, \bullet]: F/F_ 2 \times F_{2k-1}/F_{2 k} \ra F_{2k}/F_{ 2k +1 } $ as
\[\overline{\Psi}_k \bigl( \theta_k(\overline{T} , \tau)\bigr) = \overline{\Psi}_k \bigl( \theta_k(\overline{T}\sharp (\sharp_s (L_s)^{-\alpha_s} ) , \tau) + \theta_k(\sharp_s (L_s)^{\alpha_s} , \tau)\bigr) \]
\[= \overline{\Psi}_k \bigl( \theta_k(\overline{T}\sharp (\sharp_s (L_s)^{-\alpha_s} ) , \tau)\bigr) + \overline{\Psi}_k \bigl( \theta_k(\sharp_s (L_s)^{\alpha_s} , \tau)\bigr) \]
\[=\overline{\Psi}_{k+1} \circ \mathfrak{H}' \bigl( \theta_{k+1}(\overline{T}\sharp (\sharp_s (L_s)^{-\alpha_s} ) , \tau) \bigr) + \overline{\Psi}_k \bigl( \sum_s \alpha_s (b_s,0) \bigr) \]
\[=\sum_{j=1}^q X_i \otimes \lambda_j( T \sharp (\sharp_s (L_s)^{-\alpha_s} ) + \sum_s \alpha_s (x_1 \otimes \lambda_{1,s}^{(2k)} + \cdots + x_q \otimes \lambda_{q,s}^{(2k)}) = \sum_{j=1}^q x_i \otimes \lambda_j( T ),\]
as desired. Here, the third equality is obtained from (II), and the last is done from additivity of $\lambda $.
\end{proof}

\begin{proof}
[Proof of Lemma \ref{a2}] First, we set up some complexes and chain maps. In what follows, we consider only complexes over $\Q$ and omit writing the coefficients $\Q$. Let $( C_*^{\rm gr} (F/F_k), \partial_* )$ be the non-homogenous group complex of $F/F_k$. Then, on the basis of Suslin and Wodzicki's paper \cite{SW}, Massuyeau (see the proof of \cite[Proposition 4.3]{Mas}) showed the natural existence of a chain map $ \kappa: \bigwedge^*( \mathcal{L} / \mathcal{L}_{\geq k}) \ra C_*^{\rm gr} (F/F_k) $ that induces an isomorphism on the homology. Thus, it is enough for us to show that the 3-cycle $\kappa( t_L)$ is equivalent to the pushforward $(f_k)_* [S^3 \setminus \overline{T},\partial (S^3 \setminus \overline{T})]$.

To do so, we can study the 3-cycle $t_L$ from the viewpoint of the group complex. For $\ell \leq q$, let $K_\ell$ be the abelian subgroup of $F$ generated by the meridian-longitude pair $(\mathfrak{m}_{\ell} , \mathfrak{l}_{\ell} )$. Let us consider the commutative diagram,
$$
{\normalsize
\xymatrix{
0 \ar[r] & \oplus_{\ell=1}^q C_3^{\rm gr} (K_\ell ) \ar[r]^{\iota_3} \ar[d]_{\partial_3} & C_3^{\rm gr } ( F ) \ar[d]_{\partial_3} \ar[r]^{\!\!\!\!\!\!\!\! \!\!\!\! P_3} &C_3^{\rm gr } (F, K_1 \cup \cdots \cup K_q ) \ar[d]_{\partial_3} \ar[r] & 0 & (\mathrm{exact}) \\
0 \ar[r] & \oplus_{\ell=1}^q C_2^{\rm gr} (K_\ell ) \ar[r]^{\iota_2} & C_2^{\rm gr } (F ) \ar[r] ^{\!\!\!\!\!\!\!\! \!\!\!\! P_2} &C_2^{\rm gr } (F, K_1 \cup \cdots \cup K_q ) \ar[r] & 0& (\mathrm{exact}),
}}$$
where the right-hand sides are defined as the cokernel of $\iota_*$. In the subcomplex $C_2^{\rm gr} (K_\ell ) $, the cross product $\mathfrak{m}_{\ell} \times \mathfrak{l}_{\ell}$ is a 2-cycle that generates $ H_2^{\rm gr} (K_\ell ) \cong \Z $. Let $\tau_L$ be a 2-cycle $\sum_{\ell=1}^q \mathfrak{m}_{\ell} \times \mathfrak{l}_{\ell} \in \oplus_{\ell=1}^q C_2^{\rm gr} (K_\ell )$.
Accordingly, since $H_2^{\rm gr}(F)=0$, we can choose a 3-cycle $\eta_L $ in $ C_3^{\rm gr } (F, K_1 \cup \cdots \cup K_q )$ such that $\delta_*(\eta_L )= \tau_L.$

Next, we will examine the diagrams subject to $F_k$ and $F_{2k}$ with regard to their functoriality:
$$
{\normalsize
\xymatrix{
0 \ar[r] & \oplus_{\ell=1}^q C_2^{\rm gr} (K_\ell ) \ar[r] \ar[rdd] \ar[d]_{\partial_3} & C_3^{\rm gr} ( F/F_{2k -1 } ) \ar[d]_{\partial_3} \ar[r]^{P^{\rm gr}} \ar[rdd] &C_3^{\rm gr} (F/F_{2k -1 }, \cup_{\ell }K_\ell ) \ar[d]_{\partial_3} \ar[r] \ar[rdd] & 0 & & \\
0 \ar[r] & \oplus_{\ell=1}^q C_2^{\rm gr} (K_\ell ) \ar[r] \ar[rdd] & C_2^{\rm gr} ( F/F_{2k-1 }) \ar[r] \ar[rdd] &C_2^{\rm gr} (F/F_{2k -1 }, \cup_{\ell } K_\ell) \ar[r] \ar[rdd] & 0& & \\
& 0 \ar[r] & \oplus_{\ell=1}^q C_3^{\rm gr} (K_\ell ) \ar[d]_{\partial_3} \ar[r] & C_3^{\rm gr} ( F/F_k ) \ar[d]_{\partial_3} \ar[r]_{ \!\!\!\!\!\!\!\!\!\!\!\! P^{\rm gr}} &C_3^{\rm gr} (F/F_k, \cup_{\ell }K_\ell ) \ar[d]_{\partial_3} \ar[r] & 0 \\
& 0 \ar[r] & \oplus_{\ell=1}^q C_2^{\rm gr} (K_\ell ) \ar[r] & C_2^{\rm gr} ( F/F_k ) \ar[r] &C_2^{\rm gr} (F/F_k, \cup_{\ell } K_\ell ) \ar[r] & 0.
}}$$
Here, the horizontal arrows are exact, and the slanting ones are the maps induced from the projection $F/F_{2k -1 } \ra F/F_k. $ Since the above quasi-isomorphism $\kappa $ was constructed from the projective resolution of the augmentation $\varepsilon : \Q[G] \ra \Q$, this $\kappa $ replaces the wedge product $\wedge $ by the cross product $\times$. Therefore, recalling the isomorphism $\phi^*$ in \eqref{dd}, the 2-cycle $\phi_*^{-1}(\tau_L) $ modulo $F_{2k}$ is exactly equal to $\sigma_L$ in \eqref{bbc9}. Thus, the 3-cycle $t_L \in C_3^{\rm gr}(F/F_{2k -1 } )$ satisfies $P^{\rm gr} (t_L) = \phi_*^{-1} ( \eta_L) \in H_3^{\rm gr}(F/F_{k} )$.

Finally, we give a relation to the link complement $S^3 \setminus \overline{T}$. Let $E$ denote $S^3 \setminus \overline{T}$. Similarly, let us consider the long exact sequence on the cellular homology:
$$ 0 \ra H_3^{\rm cell } (E, \partial E;\Q ) \stackrel{\delta}{\lra} H_2^{\rm cell } ( \partial E;\Q ) \lra H_2^{\rm cell } (E;\Q )\lra H_2^{\rm cell } (E, \partial E;\Q ).$$
Here are some well-known facts from knot theory: The first term is $\Q$ generated by the fundamental 3-class $[E,\partial E]$, and the second is $\Q^{q}$ generated by the cross products $\mathfrak{m}_{\ell} \times \mathfrak{l}_{\ell}$. Further, the sum $\sum_{\ell =1}^q \mathfrak{m}_{\ell} \times \mathfrak{l}_{\ell}$ is zero in $ H_2^{\rm cell } (E ;\Q )$ by relation \eqref{le}. Thus, we have a cellular 3-chain $ \overline{b_L} \in C_3^{\rm cell } (E, \partial E;\Q )$ such that $ (P_3)_*[ \overline{b_L} ] = [E,\partial E]$ in the relative $C_3^{\rm cell } (E, \partial E;\Q )$ and $\partial_3( \overline{b_L} ) = \sum_{\ell =1}^q \mathfrak{m}_{\ell} \times \mathfrak{l}_{\ell}.$ In particular, letting $I: [0,1]^3 \setminus T \ra S^3 \setminus \overline{T}$ be the inclusion, we have $[E,\partial E] = I_* ( \eta_L) $. Notice that $f_k \circ I_* : \pi_1( [0,1]^3 \setminus T ) \ra F/F_k $ is equal to the reduction $F \ra F/F_k. $ 
Hence, noting $ (f_k)_* \circ I_* = \phi_*^{-1}$, we obtain the computation,
$$ (f_k)_* [E,\partial E] =(f_k)_* \circ I_*(\eta_L) = \phi_*^{-1}(\eta_L)= P^{\rm gr}(t_L) \in H_3^{\rm gr} (F/F_k, \cup_{\ell}K_{\ell}). $$
Since $ K_{\ell} $ modulo $F_k$ is isomorphic to $ \Z $ from Assumption $\mathfrak{A}_{k+1}$, $P^{\rm gr}$ induces $H_3^{\rm gr} (F/F_k ) \cong H_3^{\rm gr} (F/F_k, \cup_{\ell }K_\ell )$. Hence, $ t_L$ is equivalent to the pushforward $(f_k)_* [E,\partial E]$, as desired.
\end{proof}

\appendix
\section{HOMFLYPT polynomials and Orr invariants}
\label{SHigher}
Let $T$ be a string link and $(\overline{T}, \tau)$ be the associated based link. According to Theorem \ref{thm24}, the computation of the Orr invariant of $(\overline{T}, \tau)$ is equivalent to the $\bar{\mu}$-invariant of $ T$ of degree $< 2k$. In general, it is hard to get a presentation of longitudes $\lambda_{\ell}$ as a word of $x_1, \dots, x_q $ (However, if $T$ is a pure braid, we can easily get such a presentation). Furthermore, in quantum topology, it is natural to ask what finite type invariants recover the $\bar{\mu}$-invariants. As a solution, we will show that the result of Meilhan-Yasuhara \cite{MY} give a computation of the $\bar{\mu}$-invariants from HOMFLYPT polynomials, without having to write longitudes (Theorem \ref{ooth2}).

To describe the result, we should recall the HOMFLYPT polynomial and some of its properties. {\it The HOMFLYPT polynomial} $P ( L; t, z) \in \Z [t^{\pm 1} , z^{\pm 1}]$ of an oriented link $L \subset S^3 $ is defined formulas as follows:
\begin{enumerate}[(I)]
\item Concerning the unknot $U$ in $S^3$, the polynomial $ P ( U; t, z)$ is $1$.
\item The skein relation $t \cdot P ( L_+ ; t, z) + t^{-1} \cdot P ( L_- ; t, z) = z \cdot P ( L_0 ; t, z)$ holds, where $L_-$, $ L_+$, and $L_0$ are links formed by crossing and smoothing changes on a local region of a link diagram, as indicated in Figure \ref{t321}.
\end{enumerate}
Furthermore, given a $q $-component link $L$, we can expand the HOMFLYPT polynomial as
$$ P ( L ; t, z) = \sum_{k=1}^N P_{2k-1-q} (L;t ) z^{2k-1-q}, $$
for some $N \in \mathbb{N}$, where $P_{2k-1-q} (L;t ) \in \Z[t^{\pm 1}]$. Consider the logarithm $ \log ( P_0 (L,t))$ as a smooth function in $C^{\infty}(\R)$. We denote by $(\log ( P_0 (L,t))^{(n)} $ the $n$-th derivative of $ \log ( P_0 (L,t))$ evaluated at $t= 1$.

\begin{figure}[tpb]
\begin{center}
\begin{picture}(110,51)
\put(-96,27){\pc{kouten.p}{0.300}}
\put(16,27){\pc{kouten.n}{0.300}}
\put(126,27){\pc{arrowedmove}{0.300}}
\put(-116,43){\Large $L_+$}
\put(-6,43){\Large $L_-$}
\put(106,43){\Large $L_0$}
\end{picture}
\end{center}
\vskip -1.7pc
\caption{\label{t321} The links $L_+$, $ L_-$, and $L_0$.}
\end{figure}
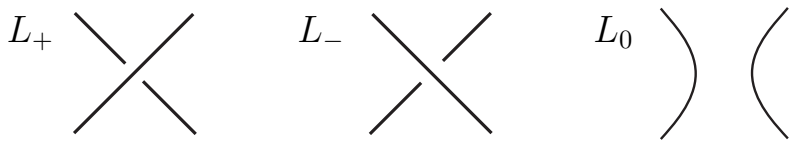

Next, we review the notation used in \cite{MY}. Let $T = \cup_{i=1}^q T_i$ be a $q$-component string link in the 3-cube, and take an index $I= i_1 i_ 2 \cdots i_m \in \{ 1, \dots, q\}^m $. Let $r_i$ be the cardinality $\# \{ k | i_{k}=i \}$. We can define another link $D_I (T)$ of $q$-components in the following manner:
\begin{enumerate}[(1)]
\item Replace each string $T_i$ by $r_i$ zero-framed parallel copies of $T_i$. Here, we write $T_{(i,j)}$ for the $j$-th copy. If $r_i=0$ for some index $i$, we delete $L_i$.
\item We define $D_I(T)$ to be the $m$-component link $\cup_{i,j} T_{(i,j)}$ with the order induced by the lexicographic order of the index $(i,j)$. This ordering defines a bijection $\{ (i,j) | \ 1 \leq i \leq q, \ 1 \leq j \leq r_i\} \ra \{ 1, \dots, m\} $.
\end{enumerate}
In addition, we define a sequence $D_I (T) \in \{ 1, \dots, m\}^m $ without a repeated index as follows. First, we take a sequence of elements of $\{ (i,j) | \ 1 \leq i \leq q, \ 1 \leq j \leq r_i\} \ra \{ 1, \dots, m\} $ by replacing each $i$ in $I$ with $(i,1), \dots, (i,r_i )$ in this order. Next, we replace each term $(i,j)$ of this sequence with $\varphi((i,j)). $

In addition, given a subsequence $H < D(I)$, we define another link $D_I(T)_J$. Let $B_I$ be an oriented $2m$-gon, and denote by $p_j$ $(j=1, \dots, m)$ a set of $m$ nonadjacent edges of $B_I$ according to the boundary orientation. Suppose that $B_I$ is embedded in $S^3$ such that $B_I \cap L = \cup_{j=1}^m p_j$ and such that each $p_j$ is contained in $L_{i_j}$ with opposite orientation. We call such a disk {\it an $I$-fusion disk} of $D_I(L)$. For any subsequence $J$ of $D(I)$, we define the oriented link $L_J$ as the closure of
$$\bigcup_{j \in \{ J \} } (L_j \cap \partial B_I) \setminus \bigl( \bigcup_{j \in \{ J \} } (L_j \cap B_I) \bigr), $$
where $\{ J\}$ is the subset of $\{ 1, \dots, n\}$ of all indices appearing in the sequence $J $.

Consider the homomorphism $\mathcal{M}_m: F \ra \Z[X_1, \dots, X_q]/ {\mathcal{I}_m}$ defined by $ \mathcal{M}_m (x_i) =1+X_i $. For a sequence in $I=i_1 \cdots i_m \in \{ 1,2, \dots, q\}^m$, $\mu_{I}(T)$ is defined by the coefficient of $X_{i_1 } \cdots X_{i_{m-1}}$ in $\mathcal{M}_m(\lambda_{i_m})$ as in \cite{Mil2,Le,Orr,MY}.
\begin{thm}
[\cite{MY}]\label{ooth2} Let $T$ be a $q$-component string link which satisfies $\mathfrak{A}_{k+1} $. Assume $3 \leq m \leq 2k+2 $. Let $I$ be a sequence in $\{ 1,2, \dots, q\}^m$ of length $m$. For any $D_I$-fusion disk for $D_I (T)$, we have
$$ \mu_I (T ) = \frac{(-1)}{m ! 2^m }\sum_{J < D(I) } (-1)^{|J|} \log P_0(\overline{D_I( T )_J }) ^{(m)}. $$
\end{thm}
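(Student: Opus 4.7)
The plan is to match both sides as tree-type finite order invariants of degree $m-1$, exploiting the Kontsevich/Magnus picture already set up in Section \ref{2thm}. On the left, by the definition of $\mathcal{M}_m$ and Remark \ref{ska}, the coefficient $\mu_I(T)$ is, modulo $\mathcal{I}_m$, the Magnus coefficient of $X_{i_1}\cdots X_{i_{m-1}}$ in $\lambda_{i_m}$; under the isomorphism $\eta$ of \eqref{bbc} it corresponds to the value of a single tree weight on $x_{i_1}\otimes\lambda_{i_1}+\cdots+x_{i_q}\otimes\lambda_{i_q}\in\Ker[\bullet,\bullet]_{k,2k}$. In particular, by Theorem \ref{thm224}, $\mu_I(T)$ is extracted from the degree-$(m-1)$ tree reduction of $Z^t(T)$.

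First, I would reduce to computing the right-hand side as a degree-$(m-1)$ Vassiliev invariant landing in the primitive (tree) part. The function $\log P_0(L;t)$ is, by the multiplicativity of $P_0$ under disjoint union and connect sum, a primitive element with respect to the Hopf-algebra structure on links; consequently its $m$-th derivative at $t=1$ defines a canonical Vassiliev invariant of degree $m$ whose weight system factors through the tree part $\mathcal{J}^t$ of $\mathcal{A}$. The doubling construction $D_I(T)$ followed by the signed sum $\sum_{J<D(I)}(-1)^{|J|}$ on substrands is the standard device (used, e.g., in the Habegger--Masbaum approach) for turning doubled components into formal chord endpoints: this alternating sum annihilates any Vassiliev invariant of degree less than $m$ and, on invariants of degree exactly $m$, evaluates to the associated weight on the specific tree determined by $I$.

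Next, I would match weight systems. By Assumption $\mathfrak{A}_{k+1}$ together with $m\leq 2k+2$, both sides are tree invariants in the regime where $Z^t_{<2k}(T)$ computes the $\bar\mu$-invariants (Theorem \ref{thm224}). Hence it suffices to verify the identity on a family of pure-braid string links that realizes a basis of the relevant tree subspace $\mathcal{J}^t_{m-1}(q,0)$; for these test tangles, the longitudes are explicit iterated commutators, and an explicit HOMFLYPT computation on $D_I$ of a pure braid together with skein manipulations reduces both sides to the same combinatorial count of ``tree contractions'' of $I$. This fixes the normalization constant, with the factor $1/m!$ originating from Taylor-expanding $\log P_0$ around $t=1$ and $1/2^m$ coming from the HOMFLYPT skein normalization applied at each of the $m$ chord endpoints.

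The main obstacle is the passage from the \emph{degree} $m$ Vassiliev invariant produced by $\sum_J(-1)^{|J|}\log P_0$ on $D_I(T)_J$ to the \emph{length} $m$ Milnor invariant: a priori the former is a general tree weight of degree $m-1$ on $q+\cdots$ strands, whereas $\mu_I$ corresponds to one specific tree. What saves us is that the particular alternating sum indexed by subsequences of $D(I)$ selects exactly the ``comb'' tree dual to the sequence $I=i_1\cdots i_m$; verifying this combinatorial pairing, and checking that no lower-order contributions survive under $\mathfrak{A}_{k+1}$, is the heart of the argument and the place where \cite{MY}'s careful analysis of $I$-fusion disks is used. Once this pairing is established, the equality of both sides follows from the coincidence of the two tree weights on a single generator together with the linearity of both constructions.
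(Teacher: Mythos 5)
The paper does not actually prove this statement: it is quoted from Meilhan--Yasuhara, and the text explicitly observes that the string-link version is established inside the proof of \cite[Theorem 1.3]{MY}, so ``we do not need to give a detailed proof.'' Your proposal is therefore attempting something the paper never does, namely to reconstruct the argument of \cite{MY}. Your outline does follow the broad strategy of that paper (show the right-hand side is a finite-type invariant of degree $m$, show the alternating sum over $J<D(I)$ kills lower-degree contributions, and compare weight systems with the Milnor invariant via the Kontsevich-integral/tree picture of Habegger--Masbaum), and to that extent it is a reasonable roadmap. But it is not a proof: you yourself defer ``the heart of the argument'' --- the verification that the alternating sum indexed by subsequences of $D(I)$ pairs with exactly the tree dual to $I$, and that nothing of lower order survives --- back to \cite{MY}. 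Since that combinatorial analysis of $I$-fusion disks is precisely the content of the theorem, the proposal in effect reduces to the same citation the paper makes, dressed as a sketch.

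There is also one genuinely incorrect step as written. You claim that because $\log P_0$ is primitive (additive under connected sum and disjoint union), its $m$-th derivative at $t=1$ has a weight system that ``factors through the tree part $\mathcal{J}^t$.'' Primitivity only forces the weight system to be supported on diagrams whose dashed graph is \emph{connected}; connected is strictly weaker than simply connected, so this does not by itself place the invariant in the tree quotient $\A^t$. In the actual argument the restriction to trees comes from the conjunction of the degree count, the vanishing hypothesis $\mathfrak{A}_{k+1}$ (with $m\leq 2k+2$), and the alternating sum over sublinks, not from the Hopf-algebraic primitivity of $\log P_0$ alone. Similarly, the normalization constants $1/m!$ and $1/2^m$ are asserted rather than derived; fixing them requires the explicit skein computation on the test braids, which is again the part you leave to \cite{MY}.
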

The original statement \cite[Theorem 1.3]{MY} dealt with only links $S^3$ and takes the same formula modulo some integers. However, as can be seen in their proof, after the authors proved Theorem \ref{ooth2} for string links before they proved the original statement. Thus, we do not need to give a detailed proof of Theorem \ref{ooth2}.

\subsection*{Acknowledgments}
The author expresses his gratitude to Professor Akira Yasuhara for his helpful discussion and for referring him to the paper \cite{Le}. He also thanks Yusuke Kuno, Jean-Baptiste Meilhan, and Kent Orr for giving him valuable comments on this work.

\vskip 1pc

\normalsize

DEPARTMENT OF
MATHEMATICS
TOKYO
INSTITUTE OF
TECHNOLOGY
2-12-1
OOKAYAMA
, MEGURO-KU TOKYO
152-8551 JAPAN

\end{document}